\documentclass[11pt]{amsart}

\usepackage[T1]{fontenc}
\usepackage[american]{babel}

\usepackage[colorlinks=true, pdfstartview=FitV, linkcolor=blue, 
      citecolor=blue]{hyperref}

\usepackage{bbm, a4wide}
\usepackage{amssymb}
\usepackage{csquotes}
\usepackage{booktabs} 
\usepackage{tabularx}
\usepackage{amsmath}
\usepackage{mathtools}
\usepackage{amsthm}
\usepackage{graphicx}
\graphicspath{ {./graphics/} }
\usepackage{hyperref}
\usepackage{enumitem}

\usepackage{cleveref}
\newcommand{\eqrefn}[1]{(\ref{#1})}

\theoremstyle{plain}
\newtheorem{theorem}{Theorem}[section]

\newtheorem{lemma}[theorem]{Lemma}
\newtheorem{proposition}[theorem]{Proposition}
\newtheorem{corollary}[theorem]{Corollary}

\theoremstyle{definition}

\newtheorem{remark}[theorem]{Remark}

\renewcommand{\epsilon}{\varepsilon}
\renewcommand{\leq}{\leqslant}
\renewcommand{\geq}{\geqslant}
\renewcommand{\setminus}{\backslash}
\renewcommand{\Re}{{\rm Re}}

\renewcommand{\P}{\mathbb{P}}

\renewcommand{\d}{~\mathsf{d}}

\newcommand{\N}{\mathbb{N}}
\newcommand{\Z}{\mathbb{Z}}
\newcommand{\R}{\mathbb{R}}
\newcommand{\C}{\mathbb{C}}

\title[Truncated pretentious distances of multiplicative functions]
{Truncated pretentious distances of \\[1mm] multiplicative arithmetic functions}

\author{Thomas Renard}
\address{%
\begin{tabular}{@{}l@{}}
Fakultät für Mathematik, Universität Bielefeld,\\
Postfach 100131, 33501 Bielefeld, Germany
\end{tabular}%
}
\email{thomas.m.renard@gmail.com}

\begin{document}
\begin{abstract}
Let $f\colon\mathbb{N}\to\mathbb{U}$ be a non-pretentious multiplicative function in the sense of Granville and Soundararajan. We study the convergence of the autocorrelation averages of $f$. In particular, we investigate the relationship between the local pretentious behaviour of $f$ on intervals of the form $[x^{\varepsilon},x]$ and its deviation from $1$ on the primes. As an application, we exhibit a class of non-pretentious multiplicative functions for which Elliott's conjecture in its original formulation holds, following up on a result by Klurman, Mangerel and Teräväinen.
\end{abstract}
\maketitle
\section{Introduction}
\label{introduction}
If $f \colon\N \to \mathbb{U}$ is a multiplicative function, where $\mathbb{U}$ denotes the closed unit disk of the complex plane, we are interested in determining whether the correlation average
\begin{equation}\label{eq_correlation_averages}
    \frac{1}{x}\sum_{n\leq x}f(a_1n+h_1)\cdots f(a_kn+h_k),
\end{equation}
converges to $0$ or not as $x\to \infty$, where $a_1,\dots, a_k$ and $h_1,\dots,h_k$ are some fixed integers.

It has long been known that the convergence of the correlation average \eqrefn{eq_correlation_averages} is based on whether the function $f$ is pretentious or not. Given two arithmetic functions $g_1, g_2 \colon \N \to \mathbb{U}$, we define the pretentious distance of Granville and Soundararajan \cite{granvillebook} by:
    \begin{align*}
        \mathbb{D}(g_1,g_2; x) := \left( {\displaystyle \sum_{p\leq x} \frac{1-\Re(g_1(p)\overline{g_2(p)})}{p}} \right)^{1/2},
    \end{align*}
    for some real number $x> 0$.
     We can similarly define the truncated pretentious distance $\mathbb{D}(g_1,g_2;x,y)$ for real numbers $0<x<y$ by taking the sum over the primes $x<p\leq y$.  
     We say that $f$ is \emph{pretentious} if there exists a Dirichlet character $\chi$ and $t\in \R$ such that 
    \begin{align*}
        \mathbb{D}(f, n \mapsto \chi(n)n^{it}; \infty) < \infty.
    \end{align*}  
    If $f$ is not pretentious, then we call it \emph{non-pretentious}.
Whenever we have $\mathbb{D}(f,g; \infty )< \infty$, where $g \colon \N \to \mathbb{U}$ is a multiplicative function, we say that $f$ \emph{pretends to be} $g$. 

One of the ways in which the pretentious distance is naturally involved in the study of multiplicative functions can be seen with Elliott's conjecture \cite{Elliott1994OnTC}. The latter states that, given multiplicative functions $f_1,\dots,f_k\colon\mathbb N\to\mathbb U$ and fixed integers $a_1,\dots,a_k,h_1,\dots,h_k$, with $a_i h_j\neq a_j h_i$ for all $i\neq j$,
if at least one of the functions $f_1,\dots,f_k$ is non-pretentious, then the correlation average
\begin{align}\label{eq_elliott_conjecture}
\frac{1}{x}\sum_{n\leq x}
f_1(a_1n+h_1)\cdots f_k(a_kn+h_k)
\end{align}
converges to $0$ as $x\to\infty$.
    Throughout this paper, we say that a multiplicative function $f\colon\N\to \mathbb{U}$ satisfies Elliott's conjecture if, for any $k\in \N$, the autocorrelation averages \eqrefn{eq_correlation_averages} converge to $0$ as $x\to \infty$.

The original formulation of this conjecture is false on a small technicality, the construction of a counterexample being due to Matomäki, Radziwiłł, and Tao \cite{Matom_ki_2015}. However, it has been proved for some specific cases. The one-point case ($k=1$) is known to be essentially Halász's theorem \cite{Halsz1968berDM}. In a recent breakthrough from 2016, Tao established \cite{tao2016logarithmicallyaveragedchowlaelliott} the two-point case of the logarithmically averaged Elliott conjecture. Tao and Teräväinen subsequently obtained a more general structural theorem \cite{Tao_Tera_2019} for logarithmically averaged correlations, giving a proof of a weaker form of the logarithmically averaged Elliott conjecture. The same authors also proved \cite{Tao_2019} some cases of the unweighted Elliott conjecture at almost all scales.

In a recent development, Klurman, Mangerel and Teräväinen \cite{klurman2023elliottsconjectureapplications} established a quantitative bound for the correlation averages \eqrefn{eq_elliott_conjecture} of multiplicative functions in terms of the truncated pretentious distances of the form $\mathbb{D}(f_j,\chi_j(n)n^{it_j};x^{\epsilon},x)$, providing new progress towards Elliott's conjecture. 
In particular, if $f\colon \N \to \mathbb{U}$ is a non-pretentious multiplicative function such that $\mathbb{D}(f,\chi(n)n^{it};x^{\epsilon},x) = o_{x\to \infty}(1)$ for some twisted Dirichlet character $\chi(n)n^{it}$, their upper bound implies that $f$ satisfies Elliott's conjecture. 

As a consequence of this result, the same authors answered a question asked by de la Rue at the AIM conference on Sarnak's conjecture \cite{IASsarnak}, namely whether it is possible to find a sufficiently large subset of primes for which the autocorrelation average of the associated Liouville-like function vanishes. For any subset of primes $\mathcal{P}\subset \P$, the functions $\Omega_{\mathcal{P}}$ and $\omega_{\mathcal{P}}$ denote the number of prime divisors with or without multiplicities, respectively.
\begin{theorem}[{\cite[Thm~2.1]{klurman2023elliottsconjectureapplications}}]\label{thm_klurman_mangerel_tera_liouville_like}
    Let $\lambda_{\mathcal{P}}(n) = (-1)^{\Omega_{\mathcal{P}}(n)}$ or $\lambda_{\mathcal{P}}(n) = (-1)^{\omega_{\mathcal{P}}(n)}$, where $\mathcal{P}\subset \P$ has relative density $0$ in $\P$ and $\sum_{p\in \mathcal{P}} \frac{1}{p}=\infty$. Then, $\lambda_{\mathcal{P}}$ satisfies Elliott's conjecture.
\end{theorem}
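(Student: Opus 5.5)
The plan is to deduce the statement from the quantitative bound of Klurman, Mangerel and Teräväinen described just above. That bound says a non-pretentious $f$ satisfies Elliott's conjecture provided there is a twisted character $\chi(n)n^{it}$ with $\mathbb{D}(f,\chi(n)n^{it};x^{\epsilon},x)=o(1)$ as $x\to\infty$, for each fixed $\epsilon>0$. We take $\chi=1$ and $t=0$. So the two tasks are: (i) show that $\lambda_{\mathcal P}$ is non-pretentious; and (ii) show that $\mathbb{D}(\lambda_{\mathcal P},1;x^{\epsilon},x)\to 0$. Both versions of $\lambda_{\mathcal P}$ agree on primes, and the distance only sees primes, so they can be treated simultaneously. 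The only difference is on prime powers, which is irrelevant for the pretentious distance.

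For (ii), note that $1-\Re\lambda_{\mathcal P}(p)$ equals $2$ if $p\in\mathcal P$ and $0$ otherwise. Hence
\[
\mathbb{D}(\lambda_{\mathcal P},1;x^{\epsilon},x)^2 = 2\sum_{\substack{x^{\epsilon}<p\leq x\\ p\in\mathcal P}}\frac{1}{p}.
\]
Relative density $0$ gives $\#\{p\leq u: p\in\mathcal P\}=o(\pi(u))$. Partial summation over $[x^\epsilon,x]$ then gives
\[
\sum_{\substack{x^{\epsilon}<p\leq x\\ p\in\mathcal P}}\frac{1}{p}\leq o(1)\cdot\sum_{x^\epsilon<p\leq x}\frac1p + o(1) = o(1)\cdot\bigl(\log(1/\epsilon)+o(1)\bigr),
\]
using Mertens' theorem. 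For fixed $\epsilon$ this is $o(1)$. The partial summation boundary terms are $o(\pi(u)/u)=o(1/\log u)$, which are harmless.

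For (i), we must show that $\mathbb{D}(\lambda_{\mathcal P},\chi(n)n^{it};\infty)=\infty$ for every Dirichlet character $\chi$ and every real $t$. The case $\chi$ trivial and $t=0$ follows directly from $\sum_{p\in\mathcal P}1/p=\infty$. In general we use the triangle inequality for $\mathbb D$:
\[
\mathbb{D}(1,\chi n^{it};x)\leq \mathbb{D}(1,\lambda_{\mathcal P};x)+\mathbb{D}(\lambda_{\mathcal P},\chi n^{it};x).
\]
Suppose the last term stayed bounded. The key observation is that $\mathbb{D}(1,\lambda_{\mathcal P};x)$ grows very slowly: it is $o(\sqrt{\log\log x})$ by density zero, via the same partial summation run on $[2,x]$. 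On the other hand, $\mathbb{D}(1,\chi n^{it};x)^2$ is known to grow like $\log\log x$ up to bounded terms whenever $(\chi,t)\neq(\chi_0,0)$. Concretely, $\sum_{p\leq x}\Re(\chi(p)p^{-it})/p$ equals $\log\min(\log x,1/|t|)+O(1)$ for principal $\chi$, and $O_{\chi,t}(1)$ for non-principal $\chi$, by the prime number theorem in arithmetic progressions and the zero-free region near $1+it$. Thus the left side is $\gg\sqrt{\log\log x}$, which is a contradiction.

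The remaining case is $\chi$ principal and $t=0$. Here $\chi n^{it}$ agrees with $1$ except at finitely many primes, so this reduces to the trivial case above.

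The main obstacle is (i) for $t\neq 0$ or $\chi$ non-principal. The argument rests on the standard estimate $\mathbb{D}(1,\chi n^{it};x)^2=\log\log x+O_{\chi,t}(1)$. We quote this rather than reprove it. The remainder is bookkeeping with Mertens' theorem.
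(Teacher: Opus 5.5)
Your proof is correct and is essentially the argument the paper gives for Proposition~\ref{prop_example_rotated_liouville}; with $\ell=1$ and $\alpha_1=1/2$ that proposition is exactly this statement, which the paper otherwise only cites. As there, density zero gives $\mathbb{D}(\lambda_{\mathcal P},1;x^{\epsilon},x)=o(1)$, non-pretentiousness comes from playing $\sum_{p\leq x,\,p\in\mathcal P}1/p=o(\log\log x)$ against $\mathbb{D}(1,\chi(n)n^{it};x)^2=\log\log x+O_{\chi,t}(1)$ for $(\chi,t)\neq(\chi_0,0)$, and you conclude via Theorem~\ref{thm_klurman_mangerel_tera} with $\chi=1$, $t=0$; your partial summation and triangle-inequality steps are only cosmetic variants of Lemma~\ref{lemma_technical_result} and the paper's restriction to $p\notin\mathcal P$, and the one step you leave implicit (as the paper does) is the diagonal choice of $\epsilon(x)\to 0$ with $\mathbb{D}(\lambda_{\mathcal P},1;x^{\epsilon(x)},x)\leq\epsilon(x)$ and $\mathbb{D}(\lambda_{\mathcal P},1;x)\geq 1/\epsilon(x)$.
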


In this paper, we explore some consequences of this result, whose proofs use similar ideas and tools.
Observing that we can express a Liouville-like function as $ \lambda_{\mathcal{P}}(n) = e(\Omega_{\mathcal{P}}(n)\alpha)$, where $\alpha = 1/2$, this naturally leads us to consider a broader class of multiplicative functions obtained by assigning rotations to the prime factors belonging to sparse sets of primes. In particular, we prove the following result.
\begin{theorem}\label{thm_elliott_for_functions_close_to_one}
    Let $f\colon \N \to \mathbb{U}$ be a non-pretentious multiplicative function such that the set $\mathcal{P}=\{p\in \mathbb{P}: f(p)\neq 1\}$ has relative density $0$ in the primes. Then, $f$ satisfies Elliott's conjecture.
\end{theorem}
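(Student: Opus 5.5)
The plan is to reduce the statement to the Klurman--Mangerel--Teräväinen criterion recalled in the introduction: if $f$ is non-pretentious and there is a twisted character $\chi(n)n^{it}$ with $\mathbb{D}(f,\chi(n)n^{it};x^{\epsilon},x)=o_{x\to\infty}(1)$ for every fixed $\epsilon>0$, then $f$ satisfies Elliott's conjecture. I would take $\chi$ trivial and $t=0$, so the task becomes showing $\mathbb{D}(f,1;x^{\epsilon},x)\to 0$ for each fixed $\epsilon\in(0,1)$. The only primes contributing to this distance are those in $\mathcal{P}$. Since $|f(p)|\leq 1$, each summand satisfies $1-\Re f(p)\leq 2$, and therefore
\begin{equation*}
\mathbb{D}(f,1;x^{\epsilon},x)^2=\sum_{\substack{x^{\epsilon}<p\leq x\\ p\in\mathcal{P}}}\frac{1-\Re f(p)}{p}\leq 2\sum_{\substack{x^{\epsilon}<p\leq x\\ p\in\mathcal{P}}}\frac{1}{p}.
\end{equation*}

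The key step is to show that relative density $0$ forces the logarithmic mass of $\mathcal{P}$ on $[x^{\epsilon},x]$ to vanish. Write $\pi_{\mathcal{P}}(u)=\#\{p\leq u: p\in\mathcal{P}\}=\delta(u)\pi(u)$ with $\delta(u)\to 0$. I would use partial summation,
\begin{equation*}
\sum_{\substack{x^{\epsilon}<p\leq x\\ p\in\mathcal{P}}}\frac1p=\frac{\pi_{\mathcal{P}}(x)}{x}-\frac{\pi_{\mathcal{P}}(x^{\epsilon})}{x^{\epsilon}}+\int_{x^{\epsilon}}^{x}\frac{\pi_{\mathcal{P}}(u)}{u^2}\,du .
\end{equation*}
The boundary terms are $O(1/\log x)$. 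By Chebyshev's bound, the integral is at most
\begin{equation*}
\sup_{u\geq x^{\epsilon}}\delta(u)\cdot O\Big(\int_{x^{\epsilon}}^{x}\frac{du}{u\log u}\Big)=O\big(\log(1/\epsilon)\sup_{u\geq x^{\epsilon}}\delta(u)\big),
\end{equation*}
and this tends to $0$ as $x\to\infty$ for fixed $\epsilon$. Hence $\mathbb{D}(f,1;x^{\epsilon},x)=o(1)$.

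The main point needing care is matching the precise hypothesis of the quantitative theorem in \cite{klurman2023elliottsconjectureapplications}. It is stated in terms of $\mathbb{D}(f_j,\chi_j(n)n^{it_j};x^{\epsilon},x)$ and requires non-pretentiousness separately. Non-pretentiousness is assumed here. The truncated distance is $o(1)$ for every fixed $\epsilon$, so I would let $x\to\infty$ first in the bound and then $\epsilon\to0$, in the order the bound is formulated. This is exactly how Theorem~\ref{thm_klurman_mangerel_tera_liouville_like} was deduced for $\lambda_{\mathcal{P}}$, and the argument above only uses $f(p)=1$ off $\mathcal{P}$ and the density-zero condition, so the same deduction applies verbatim.
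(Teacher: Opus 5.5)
Your proposal is correct and follows the paper's proof. You bound $\mathbb{D}(f,1;x^{\epsilon},x)^2$ by $2\sum_{p\in\mathcal{P},\,x^{\epsilon}<p\leq x}1/p$, show this is $o(1)$ for fixed $\epsilon$ using density zero, and conclude with Theorem~\ref{thm_klurman_mangerel_tera}, where non-pretentiousness supplies $\mathbb{D}(f,1;x)\geq 1/\epsilon$ for large $x$. The only differences are cosmetic: you prove the density step by partial summation instead of the dyadic decomposition of Lemma~\ref{lemma_technical_result}, and you take $\limsup_{x}$ at fixed $\epsilon$ and then let $\epsilon\to 0$, which is equivalent to the paper's choice of a slowly decaying $\epsilon(x)$.
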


 We also investigate some limitations of the underlying approach based on the size of the truncated pretentious distance. Considering Chowla's conjecture \cite{Chowla1966TheRH}, it is natural to ask whether the result can be extended to larger subsets of the primes. Unfortunately, in the finitely generated setting (in the sense that the set $\{f(p) : p\in \P\}$ is finite), there is an obstruction. Any multiplicative function that is arbitrarily close to $1$ on $[x^{\epsilon},x]$, while remaining globally non-pretentious, can only differ from $1$ on a set of primes of lower relative density $0$.

\begin{theorem}\label{thm_cool_criteria}
    Let $f \colon \N \to \mathbb{U} $ be a non-pretentious finitely generated multiplicative function. Let $\mathcal{P} = \{p\in \mathbb{P}:f(p) \neq 1\}$. Then, if there exists an $\epsilon := \epsilon(x)$ taking values in $(1/\log\log x, 1/2)$ such that $\mathbb{D}(f,1;x^{\epsilon},x)=o_{x\to \infty}(1)$, the set $\mathcal{P}$ has lower relative density $0$ in the primes.   
\end{theorem}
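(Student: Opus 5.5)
The plan is to argue by contradiction. Since $f$ is finitely generated, the set $\{f(p)\}$ is finite, so there is $\delta>0$ with $1-\Re f(p)\geq \delta$ for every $p\in\mathcal{P}$. Hence
\[
\mathbb{D}(f,1;x^{\epsilon},x)^2 \;\geq\; \delta \sum_{\substack{x^{\epsilon}<p\leq x\\ p\in\mathcal{P}}}\frac1p ,
\]
and the hypothesis gives $\sum_{x^{\epsilon}<p\leq x,\,p\in\mathcal{P}} 1/p = o(1)$ along all $x$. Suppose, for contradiction, that $\mathcal{P}$ has positive lower relative density. Then there are $c>0$ and $X_0$ such that $\pi_{\mathcal{P}}(y)\geq c\,\pi(y)$ for all $y\geq X_0$.

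The key step is to convert this density information into a lower bound for the reciprocal sum over a window $(x^{\epsilon},x]$. By partial summation,
\[
\sum_{\substack{x^{\epsilon}<p\leq x\\ p\in\mathcal{P}}}\frac1p=\frac{\pi_{\mathcal P}(x)}{x}-\frac{\pi_{\mathcal P}(x^\epsilon)}{x^\epsilon}+\int_{x^\epsilon}^x\frac{\pi_{\mathcal P}(t)}{t^2}\,dt .
\]
The boundary terms are $O(1/\log x^{\epsilon})=O(1/(\epsilon\log x))$. Since $\epsilon>1/\log\log x$, this is $O(\log\log x/\log x)=o(1)$; it is also harmless, as it cannot create a large negative contribution. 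The integral is at least
\[
c\int_{x^\epsilon}^x \frac{dt}{t\log t}\,(1+o(1))=c\log(1/\epsilon)\,(1+o(1)),
\]
and $\log(1/\epsilon)\geq \log 2$ because $\epsilon<1/2$. This needs $x^{\epsilon}\geq X_0$, which holds because $x^{\epsilon}\geq x^{1/\log\log x}=\exp(\log x/\log\log x)\to\infty$. So the reciprocal sum is at least $c\log 2/2$ for large $x$, contradicting the $o(1)$ bound obtained above.

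The main point of care is that lower density controls $\pi_{\mathcal{P}}(t)$ for every large $t$, not just along a subsequence. That is exactly why the conclusion is \emph{lower} density $0$, and why the lower bound $\epsilon>1/\log\log x$ is needed: it guarantees $x^{\epsilon}\to\infty$, so the whole window eventually lies in the range where $\pi_{\mathcal P}(t)\ge c\pi(t)$. Otherwise the argument is the routine partial-summation estimate with Chebyshev/PNT bounds for $\pi(t)$. Non-pretentiousness of $f$ plays no role in this argument; it only ensures that the statement is not vacuous. The one case to check is $\mathcal{P}$ finite, where the conclusion is trivial.
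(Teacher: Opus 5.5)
Your argument is correct. Its skeleton matches the paper's: argue by contradiction, then bound the reciprocal sum over primes of $\mathcal{P}$ in $(x^{\epsilon},x]$ from below by a multiple of $\log(1/\epsilon)$. It differs in two technical choices, and each one repairs a step that the paper's proof glosses over.

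First, the paper splits $\mathcal{P}$ into the level sets $\mathcal{P}_i=\{p: f(p)=h_i\}$ and asserts that one of them has positive lower relative density. This need not hold. Finitely many sets of lower relative density $0$ can have a union of density $1$, e.g.\ sets built from alternating, rapidly growing blocks of primes; only positive \emph{upper} density of some piece is guaranteed. Your uniform constant $\delta=\min_i(1-\Re h_i)>0$ makes the splitting unnecessary. It is positive because $h_i\in\mathbb{U}\setminus\{1\}$ forces $\Re h_i<1$.

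Second, the paper cuts $(x^{\epsilon},x]$ into dyadic blocks $I_j$ and claims $|\mathcal{P}_1\cap I_j|\gg_c\pi(x/2^j)$ block by block. A lower density hypothesis only controls the cumulative counts $\pi_{\mathcal{P}_1}(y)$, and a set of lower density below $1/2$ can miss whole dyadic blocks. Your partial summation uses exactly the available information, namely $\pi_{\mathcal{P}}(t)\geq c\,\pi(t)$ for $t\geq X_0$. You also use $\epsilon>1/\log\log x$ in precisely the two places it is needed: to guarantee $x^{\epsilon}\geq X_0$, and to make the boundary term $O(1/(\epsilon\log x))$ negligible.

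You end with the same estimate as the paper, $\mathbb{D}(f,1;x^{\epsilon},x)^2\geq (c\,\delta+o(1))\log(1/\epsilon)$, but with every step justified. Your remark that non-pretentiousness plays no role applies equally to the paper's proof.
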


\section{Notations and some preliminaries}

If not specified otherwise, $p$ denotes a prime number, and $N$, $n$, $m$ and $k$ are integers. The expression $n|m$ means that $n$ divides $m$, $p^k \| n$ means $p^k | n$ but $p^{k+1}$ does not divide $n$, and $(n,m) = k$ means that the greatest common divisor of $n$ and $m$ is $k$. 

For any set $A\subset \N$ and $x>0$, we use the notation $\pi_A(x)$ to denote the number of primes less than or equal to $x$ that are in $A$, i.e $\pi_A(x) = |\{p\in [1,x] \cap A\}|$. We simply write $\pi(x)$ for $\pi_{\mathbb{P}}(x)$. We denote by $\pi(x;n,m) = |\{p\in [1,x] \cap \P:p\equiv m\mod n\}|$, i.e, the number of primes $p\leq x$ congruent to $m$ modulo $n$. We denote by $\varphi$ Euler's totient function. For all $x\in \R$ we write $e(x) := \exp(2\pi i x)$. We use the standard asymptotic notations. Additionally, we write $f(x) \sim g(x)$ if $f(x) = g(x)+o(1)$ and $f(x) \asymp g(x)$ if $f \ll g$ and $g\ll f$ hold simultaneously.

Throughout this paper, we use the following notions of density. Let $\mathcal{P}\subset \mathbb{P}$ be a subset of primes. The natural relative density of $\mathcal{P}$ in the primes is defined as
\begin{align*}
    d_{\mathbb{P}}(\mathcal{P}) = \lim_{N\to \infty} \frac{|\mathcal{P}\cap \{1,\dots,N\}|}{|\mathbb{P}\cap \{1,\dots,N\}|},
\end{align*}
if the limit exists. We also define the upper (resp. lower) relative density of $\mathcal{P}$ by replacing the $\lim$ by the $\limsup$ (resp. $\liminf$), denoted $\overline{d_{\mathbb{P}}}$ (resp. $\underline{d_{\mathbb{P}}}$). We define its logarithmic relative density to be
\begin{align*}
    d_{\mathbb{P}}^{\log}(\mathcal{P}) &= \lim_{x\to \infty}\frac{1}{\log\log x}\sum_{\substack{p\leq x \\p\in \mathcal{P}}}\frac{1}{p},
\end{align*}
if the limit exists. Similarly, we define the upper (resp. lower) logarithmic relative density of $\mathcal{P}$ by replacing the $\lim$ by the $\limsup$ (resp. $\liminf$), denoted $\overline{d^{\log}_{\mathbb{P}}}$ (resp. $\underline{d^{\log}_{\mathbb{P}}}$).

We note the following well-known relationship between these densities (see for example \cite[Thm.~III.1.2]{tenenbaum1995introduction} for a proof of these inequalities). For any subset of primes $\mathcal{P}\subset \mathbb{P}$, we have
    \begin{equation}\label{eq_densities}
        \underline{d_{\mathbb{P}}}(\mathcal{P}) \leq \underline{d^{\log}_{\mathbb{P}}}(\mathcal{P}) \leq \overline{d^{\log}_{\mathbb{P}}}(\mathcal{P}) \leq \overline{d_{\mathbb{P}}}(\mathcal{P}).
    \end{equation}
    In particular, if $\mathcal{P}$ has natural relative density, then it also has logarithmic relative density, and the two are equal.

Finally, since $\mathbb{D}(f,f;x)$ can be non-zero and $\mathbb{D}(f,g;x)$ can vanish without $f$ and $g$ being equal, the pretentious distance is not a metric. However, it still behaves close enough to a metric. In particular, it satisfies the triangle inequality (see e.g. \cite[Lemma~3.1]{granvillebook}). Let $f,g,h \colon \N \to \mathbb{U}$ be multiplicative functions. For any $x>0$, we have
    \begin{equation*}
        \mathbb{D}(f,g;x) \leq \mathbb{D}(f,h;x) + \mathbb{D}(h,g;x).
    \end{equation*}

\section{Classes of multiplicative functions satisfying Elliott's conjecture}\label{sec_classes_of_multiplicative_functions_satisfying_elliott}
We begin by introducing the example mentioned in the introduction, which generalises the class of Liouville-like functions. We show that these functions satisfy Elliott's conjecture. Our methods use arguments similar to the ones used in \cite{klurman2023elliottsconjectureapplications} to prove Theorem \ref{thm_klurman_mangerel_tera_liouville_like}.

\begin{proposition}\label{prop_example_rotated_liouville}
    Let $\ell \in \N$, $\alpha_1, \dots, \alpha_{\ell} \in \R \setminus \Z$, and $\mathcal{P}_1, \dots, \mathcal{P}_{\ell}\subseteq \mathbb{P}$ be subsets of prime with 
    \begin{equation*}
        \sum_{\substack{p\in \P \\ \alpha(p) \notin \Z}}  \frac{1}{p}=\infty ,\quad\text{where } \alpha(p) = \sum_{\substack{1\leq i \leq \ell \\ p\in \mathcal{P}_i}} \alpha_i,
    \end{equation*}
    and $d_{\mathbb{P}}(\mathcal{P}_i) = 0$ for $i=1,\dots,\ell$. Let $u\colon \N \to \R$ be defined by $u(n) = \sum_{i=1}^{\ell} u_{\mathcal{P}_i}(n)\alpha_i$, where $u_{\mathcal{P}_i}$ is either $\omega_{\mathcal{P}_i}$ or $\Omega_{\mathcal{P}_i}$ for $i=1,\dots,\ell$, and let $f\colon \N \to \mathbb{U}$ be defined by $f(n) = e(u(n))$. Then, $f$ satisfies Elliott's conjecture.
\end{proposition}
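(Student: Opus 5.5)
The plan is to apply the Klurman--Mangerel--Teräväinen criterion from \cite{klurman2023elliottsconjectureapplications}, as described in the introduction. Any non-pretentious $f$ with $\mathbb{D}(f,\chi(n)n^{it};x^{\epsilon},x)=o(1)$ for some twisted character satisfies Elliott's conjecture. We take $\chi=1$, $t=0$. The argument then has two parts: (i) $\mathbb{D}(f,1;x^{\epsilon},x)\to 0$ for a suitable slowly decaying $\epsilon=\epsilon(x)$, and (ii) $f$ is non-pretentious.

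For (i), note that $f(p)=e(\alpha(p))$, and $f(p)\neq 1$ only when $p\in\mathcal{Q}:=\bigcup_i \mathcal{P}_i$. Since $\mathcal{Q}$ is a finite union of sets of relative density $0$, it has relative density $0$. Hence
\begin{equation*}
\mathbb{D}(f,1;x^{\epsilon},x)^2\leq 2\sum_{\substack{x^{\epsilon}<p\leq x\\ p\in\mathcal{Q}}}\frac1p .
\end{equation*}
By partial summation, using $\pi_{\mathcal{Q}}(y)=o(\pi(y))$, the right-hand side is $o(\log(1/\epsilon))+o(1)$. Let $\eta(y):=\sup_{z\geq y}\pi_{\mathcal{Q}}(z)/\pi(z)$. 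Over a dyadic-in-$\log$ range the sum is $\ll \eta(x^{\epsilon})\log(1/\epsilon)$. Choosing $\epsilon(x)\to 0$ slowly enough that $\eta(x^{\epsilon})\log(1/\epsilon)\to 0$ gives (i); for instance $\epsilon=\max(\eta(x^{1/2})^{1/2},1/\log\log x)$ should work after a small adjustment. If the criterion requires a fixed $\epsilon$ (with $o(1)$ as $x\to\infty$ for each fixed $\epsilon$), the same computation gives it directly.

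For (ii), the main obstacle is to rule out $\mathbb{D}(f,\chi(n)n^{it};\infty)<\infty$ for every $\chi,t$. The case $\chi$ trivial and $t=0$ follows from the hypothesis. The set $\{p:\alpha(p)\notin\Z\}$ has $\sum 1/p=\infty$, and since $\alpha(p)$ takes only finitely many values, $1-\cos(2\pi\alpha(p))\geq c>0$ on that set. So $\mathbb{D}(f,1;\infty)=\infty$. In general, suppose $\mathbb{D}(f,\chi n^{it};\infty)<\infty$. By the triangle inequality and part (i), on $[x^{\epsilon},x]$ the function $\chi(p)p^{it}$ is close to $1$ in the sense that $\mathbb{D}(1,\chi n^{it};x^{\epsilon},x)\to 0$ along a sequence. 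This is impossible for nontrivial $\chi$, or for $t\neq0$. By the prime number theorem in arithmetic progressions, $\sum_{x^{\epsilon}<p\leq x}(1-\Re\chi(p)p^{it})/p$ is asymptotically $\log(1/\epsilon)$ times a positive constant when $\chi$ is non-principal. When $\chi$ is principal and $t\neq 0$, the sum is $\log(1/\epsilon)-\Re\int_{\epsilon}^{1} \frac{x^{itu}}{u}\d u+o(1)\to\infty$, since the oscillating integral stays bounded as $x\to\infty$. Hence $\chi$ is principal and $t=0$, which reduces to the first case.

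With (i) and (ii) established, the Klurman--Mangerel--Teräväinen bound yields that every autocorrelation average \eqrefn{eq_correlation_averages} of $f$ tends to $0$, so $f$ satisfies Elliott's conjecture. I expect the delicate step to be matching the precise form of the $\epsilon$-dependence in their criterion, together with the uniformity of the non-pretentiousness bound in $t$ (for instance $|t|\leq x$). The second point may require replacing the qualitative argument above by the standard lower bound $\mathbb{D}(1,\chi n^{it};x)^2\geq \tfrac12\log\log x-O(1)$ uniformly in the range of $t$, in the style of Vinogradov--Korobov.
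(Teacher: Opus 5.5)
Your proposal is correct and is essentially the paper's proof: Theorem~\ref{thm_klurman_mangerel_tera} with $\chi=1$, $t=0$, the bound $\mathbb{D}(f,1;x^{\epsilon},x)=o(1)$ from density zero (the paper via Lemma~\ref{lemma_technical_result}) with a slowly decaying $\epsilon(x)$, and non-pretentiousness from the finitely many values of $\alpha(p)$ plus the PNT in progressions; the only variation is that you rule out $\chi(n)n^{it}$ locally on $(x^{\epsilon},x]$ via the triangle inequality and part (i), whereas the paper argues globally with $\mathbb{D}(1,\chi(n)n^{it};x)^2\geq\log\log x+O(1)$ against $\sum_{p\leq x,\,p\in\mathcal{P}}1/p=o(\log\log x)$. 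Your worry about uniformity in $t$ is unnecessary, since that theorem has $\chi_j,t_j$ fixed and its global hypothesis only asks for $\mathbb{D}(f,1;x)\geq 1/\epsilon$; also, your explicit choice $\epsilon=\max(\eta(x^{1/2})^{1/2},1/\log\log x)$ can fail because $\eta(x^{\epsilon})$ may far exceed $\eta(x^{1/2})$, and this is repaired by using $\eta(x^{1/\log\log x})\geq\eta(x^{\epsilon})$ instead.
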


We use the following two lemmas.

\begin{lemma}\label{lemma_technical_result}
    Let $\mathcal{P} \subset \mathbb{P}$ be a subset of primes of relative density $0$. Then, for any fixed $\epsilon >0$ 
    \begin{equation*}
        \sum_{\substack{x^{\epsilon }\leq p \leq x\\p\in \mathcal{P}}}\frac{1}{p} = o(1).
    \end{equation*}
\end{lemma}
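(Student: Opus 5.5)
The approach is partial summation against the prime counting function $\pi_{\mathcal{P}}(t)$, using the hypothesis $\pi_{\mathcal{P}}(t) = o(\pi(t))$ together with the Chebyshev-type bound $\pi(t) \ll t/\log t$.

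Fix $\epsilon>0$ and $\eta>0$. By the density hypothesis there is $t_0=t_0(\eta)$ such that $\pi_{\mathcal{P}}(t)\leq \eta\,\pi(t)\leq C\eta\, t/\log t$ for all $t\geq t_0$, where $C$ is an absolute constant. We take $x$ large enough that $x^{\epsilon}\geq t_0$. Writing the sum as a Stieltjes integral and integrating by parts gives
\begin{equation*}
\sum_{\substack{x^{\epsilon}\leq p\leq x\\ p\in\mathcal{P}}}\frac1p=\int_{x^{\epsilon^-}}^{x}\frac{\d\pi_{\mathcal{P}}(t)}{t}=\frac{\pi_{\mathcal{P}}(x)}{x}-\frac{\pi_{\mathcal{P}}(x^{\epsilon^-})}{x^{\epsilon}}+\int_{x^{\epsilon}}^{x}\frac{\pi_{\mathcal{P}}(t)}{t^2}\d t .
\end{equation*}

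We bound the three terms on the right separately.

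\emph{Boundary terms.} The first is at most $C\eta/\log x$. The second is nonpositive, so it can be dropped for an upper bound.

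\emph{Integral term.} This is at most
\begin{equation*}
C\eta\int_{x^{\epsilon}}^{x}\frac{\d t}{t\log t}=C\eta\,\bigl(\log\log x-\log(\epsilon\log x)\bigr)=C\eta\log(1/\epsilon).
\end{equation*}

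Combining the three bounds, $\limsup_{x\to\infty}$ of the sum is at most $C\eta\log(1/\epsilon)$. Since $\epsilon$ is fixed and $\eta$ is arbitrary, the sum is $o(1)$.

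The only step needing care is the integral term. A crude bound that ignores the sparsity of $\mathcal{P}$ (i.e. uses all primes) would give the nonvanishing quantity $\log(1/\epsilon)$. The point is that the interval $[x^{\epsilon},x]$ carries only a bounded amount, $\log(1/\epsilon)$, of $\d t/(t\log t)$ mass, so the factor $\eta$ from the density hypothesis survives. An alternative route would be to split the interval dyadically and use Mertens' theorem on each piece. This is also why the lemma needs $\epsilon$ fixed (or at least $\log(1/\epsilon)=o(1/\eta)$): letting $\epsilon\to0$ too fast would break it.
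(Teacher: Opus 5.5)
Your proof is correct and is essentially the paper's argument in continuous form: the paper splits $(x^{\epsilon},x]$ into dyadic blocks $(x/2^{j+1},x/2^{j}]$, bounds each block's contribution by $\frac{2^{j+1}}{x}\pi_{\mathcal{P}}(x/2^{j})=o\bigl(1/\log(x/2^{j})\bigr)$, and sums $\sum_{j}1/\log(x/2^{j})\approx\log(1/\epsilon)/\log 2$, which is the discrete counterpart of your $\int_{x^{\epsilon}}^{x}\frac{\d t}{t\log t}=\log(1/\epsilon)$. Your partial-summation version has two small advantages: it makes the uniformity over scales $t\geq x^{\epsilon}$ explicit through $\eta$ and $t_0$, and it uses only the upper bound $\pi_{\mathcal{P}}(t)\leq\eta\,\pi(t)$, so it avoids the paper's unnecessary intermediate claim that $\pi_{\mathcal{P}}(x/2^{j})-\pi_{\mathcal{P}}(x/2^{j+1})\sim\pi_{\mathcal{P}}(x/2^{j})/2$, which does not hold for a general density-zero set.
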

\begin{proof}
    Let $J = (1-\epsilon)\log_2 (x)$. Without loss of generality, we can assume $J\in \mathbb{N}$ (otherwise we round using $\lceil J\rceil$ since this rounding will have a negligible impact). We write 
    \begin{equation*}
        (x^{\epsilon},x] = \bigcup_{j=0}^{J-1}\left(\frac{x}{2^{j+1}},\frac{x}{2^j}\right],
    \end{equation*}
    so that we have
    \begin{equation*}
        \sum_{\substack{x^{\epsilon }\leq p \leq x\\p\in \mathcal{P}}}\frac{1}{p} = \sum_{j=0}^{J-1}\sum_{\substack{x/2^{j+1}\leq p \leq x/2^j\\p\in \mathcal{P}}}\frac{1}{p}.
    \end{equation*}
    Let $I_j = \left(\frac{x}{2^{j+1}},\frac{x}{2^j}\right]$ for $j=0,\dots,J-1$. Then, since for any primes $p\in I_j$ we have $\frac{1}{p}\leq \frac{2^{j+1}}{x}$, it follows that
    \begin{equation*}
        \sum_{\substack{p\in I_j\\p\in \mathcal{P}}}\frac{1}{p}\leq \frac{2^{j+1}}{x}|\{p\in I_j : p\in \mathcal{P}\}|.
    \end{equation*}
    Now, notice that $|\{p\in I_j : p\in \mathcal{P}\}| = \pi_{\mathcal{P}}(x/2^j)-\pi_{\mathcal{P}}(x/2^{j+1})\sim \pi_{\mathcal{P}}(x/2^j)/2 =o(\pi(x/2^j))$. It follows that
    \begin{equation*}
        \sum_{\substack{p\in I_j\\p\in \mathcal{P}}}\frac{1}{p} = o\left(\frac{1}{\log(x/2^j)}\right),
    \end{equation*}
    and by summing over $j$, we conclude that
    \begin{equation*}
        \sum_{\substack{x^{\epsilon }\leq p \leq x\\p\in \mathcal{P}}}\frac{1}{p} = o(1).
    \end{equation*}
    In fact, using summation by parts together with $J= (1-\epsilon)\log_2 (x) $, we get
    \begin{align*}
        \sum_{j=0}^J\frac{1}{\log(x/2^j)} &= \frac{J}{\log(x/2^J)}-\log 2 \int_0^J\frac{t}{(\log(x/2^t))^2}\d t +O\left(\frac{1}{\log x}\right)\\
        &= \int_0^J\frac{\d t}{\log(x/2^t)}+O\left(\frac{1}{\log x}\right)= \frac{\log(1/\epsilon)}{\log 2}+O\left(\frac{1}{\log x}\right),      
    \end{align*}
    where we conclude by noticing that the main term is a constant only depending on $\epsilon$.
\end{proof}

\begin{lemma}\label{lemma_sum_pit}
    Let\/ $t\neq 0$ be a real number and let $q\in \N$ be fixed. Then, for all integers $0< \ell <q$ such that $(\ell,q) = 1$, we have that
    \begin{equation*}
        \sum_{\substack{p\leq x\\p\equiv \ell \mod q}}\frac{1}{p^{1+it}}
    \end{equation*}
    converges to a constant depending on $q,\ell$ and $t$ as $x\to \infty$.
\end{lemma}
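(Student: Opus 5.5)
The plan is to use orthogonality of Dirichlet characters modulo $q$ to reduce the restricted sum to sums over all primes twisted by characters. Since $(\ell,q)=1$, we have
\begin{equation*}
    \sum_{\substack{p\leq x\\p\equiv \ell \mod q}}\frac{1}{p^{1+it}} = \frac{1}{\varphi(q)}\sum_{\chi \bmod q}\overline{\chi}(\ell)\sum_{p\leq x}\frac{\chi(p)}{p^{1+it}}.
\end{equation*}
It therefore suffices to show that, for each fixed character $\chi$ modulo $q$ and fixed $t\neq 0$, the series $\sum_{p}\chi(p)p^{-1-it}$ converges. Finitely many primes $p\mid q$ contribute a bounded finite amount, so they play no role in convergence.

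For each $\chi$, I will compare $\sum_{p\leq x}\chi(p)p^{-1-it}$ with $\log L(s,\chi)$ on the line $s=1+it$. I will use the prime number theorem in arithmetic progressions in the quantitative form
\begin{equation*}
    \psi(y,\chi)=\sum_{n\leq y}\chi(n)\Lambda(n)=E_\chi y+O\bigl(y\exp(-c\sqrt{\log y})\bigr),
\end{equation*}
where $E_\chi=1$ if $\chi$ is principal and $E_\chi=0$ otherwise. This is available with $q$ fixed, so Siegel-zero issues only affect constants. Set $\theta(y,\chi)=\sum_{p\leq y}\chi(p)\log p$. Prime powers change $\theta$ by only $O(\sqrt y)$, so $\theta$ obeys the same asymptotic. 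Partial summation then gives
\begin{equation*}
    \sum_{p\leq x}\frac{\chi(p)}{p^{1+it}} = \frac{\theta(x,\chi)}{x^{1+it}\log x}+\int_2^x \theta(y,\chi)\,\frac{(1+it)\log y+1}{y^{2+it}(\log y)^2}\d y,
\end{equation*}
which I then split into a main part and an error part.

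For the error part, replace $\theta(y,\chi)$ by its error term. The boundary term tends to $0$. The resulting integral converges absolutely, because the integrand is $\ll_t \exp(-c\sqrt{\log y})/(y\log y)$, and this is integrable.

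For the main part, which only appears when $\chi=\chi_0$, replace $\theta$ by $y$. This gives
\begin{equation*}
    \frac{x^{-it}}{\log x}+\int_2^x\frac{(1+it)\log y+1}{y^{1+it}(\log y)^2}\d y,
\end{equation*}
and the boundary term tends to $0$. The integral equals $\int_2^x y^{-1-it}(\log y)^{-1}\d y$ plus an absolutely convergent piece. After the substitution $u=\log y$, the first term becomes $\int_{\log 2}^{\log x} e^{-itu}u^{-1}\d u$. Since $t\neq 0$, this is a convergent Dirichlet-type oscillatory integral, as one sees by integrating by parts once.

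Hence each character sum converges to a limit depending on $\chi$ and $t$. The linear combination over $\chi$ then converges to a constant depending on $q$, $\ell$ and $t$.

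The main obstacle is making sure the principal-character term converges. This is exactly where $t\neq 0$ is needed: for $t=0$ the sum $\sum 1/p$ diverges like $\log\log x$. The oscillatory integration by parts above handles it. The non-principal characters only need the zero-free region, which is supplied by the cited prime number theorem in arithmetic progressions.
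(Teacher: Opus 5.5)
Your argument is correct and is essentially the paper's. Both use partial summation against the prime number theorem in progressions, after which everything converges absolutely except the oscillatory integral $\int_{\log 2}^{\log x} e^{-itv}v^{-1}\,dv$, which converges by one integration by parts precisely because $t\neq 0$. The only difference is your detour through character orthogonality and $\theta(y,\chi)$: the paper applies partial summation directly to $\pi(u;q,\ell)$ with error $O_q(u/(\log u)^2)$, which already suffices, though your version gives the slightly stronger fact that each $\sum_p \chi(p)p^{-1-it}$ converges.
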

\begin{proof}
    Using summation by parts, we get for all $x\geq 2$
    \begin{equation}\label{eq_1}
        \sum_{\substack{p\leq x\\p\equiv \ell \mod q}}\frac{1}{p^{1+it}} = \frac{\pi(x;q,\ell)}{x^{1+it}}+(1+it)\int_2^x\frac{\pi(u;q,\ell)}{u^{2+it}}\d u.
    \end{equation}
    From the \emph{Prime Number Theorem} (PNT) for arithmetic progressions, we get the estimate
    \begin{equation*}
        \pi(x;q,\ell) = \frac{x}{\varphi(q)\log x}+ O_q\left(\frac{x}{(\log x)^2}\right),
    \end{equation*}
    which we can insert in \eqrefn{eq_1} to get 
    \begin{equation*}
        \sum_{\substack{p\leq x\\p\equiv \ell \mod q}}\frac{1}{p^{1+it}} = \frac{\pi(x;q,\ell)}{x^{1+it}}+\frac{(1+it)}{\varphi(q)}\int_2^x\frac{u^{-1-it}}{\log u}\d u + O_{q,t}\left(\int_2^x \frac{\d u}{u(\log u)^2}\right).
    \end{equation*}
    Clearly, we have
    \begin{equation*}
        \frac{\pi(x;q,\ell)}{x^{1+it}}= o_{x\to \infty}(1).
    \end{equation*}
    Moreover, the integral from the error term converges absolutely. As for the main integral, using a suitable change of variable and integration by parts, we get
    \begin{equation*}
       \int_2^x\frac{u^{-1-it}}{\log u}\d u =C_t -\frac{1}{it}\int_{\log 2}^{\log x} \frac{e^{-itv}}{v^2}\d v +O_t\left(\frac{1}{\log x}\right),
    \end{equation*}
    where $C_t\in \C$ is a fixed constant depending only on $t$, and the remaining integral converges absolutely as $x\to \infty$, which completes the argument.
\end{proof}
We use these lemmas together with the following result from Klurman, Mangerel and Teräväinen \cite{klurman2023elliottsconjectureapplications} to prove Proposition \ref{prop_example_rotated_liouville}. We note that this result also plays a key role in several other arguments throughout the paper.
\begin{theorem}[{\cite[Thm~4.1]{klurman2023elliottsconjectureapplications}}]\label{thm_klurman_mangerel_tera}
    Let $k\geq 1$ and $a_1,\dots, a_k,h_1,\dots,h_k\in \N$ be fixed with $a_ih_j\neq a_jh_i$ whenever $i\neq j$. Also, let Dirichlet characters $\chi_1,\dots,\chi_k$ and real numbers $t_1,\dots,t_k$ be fixed. Then, for any $x\geq 3$, $\epsilon \in (1/\log\log x, 1/2)$ and any multiplicative functions $f_1,\dots f_k \colon \N \to \mathbb{U}$ satisfying
    \begin{equation*}
    \max_{1\leq j\leq k} \mathbb{D}(f_j,\chi_j(n)n^{it_j};x^{\epsilon},x) \leq \epsilon,
    \qquad \text{and}\qquad
    \max_{1\leq j\leq k} \mathbb{D}(f_j,\chi_j(n)n^{it_j};x) \geq \frac{1}{\epsilon},
\end{equation*}
    we have
    \begin{equation}\label{eq_KMT}
        \left| \frac{1}{x}\sum_{n\leq x} f_1(a_1n+h_1)\cdots f_k(a_kn+h_k)\right| \ll \left(\log \frac{1}{\epsilon}\right)^{1/2}\epsilon.
    \end{equation}
\end{theorem}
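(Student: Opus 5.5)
The plan is to exploit the fact that on $(x^{\epsilon},x]$ each $f_j$ is essentially the twisted character $\chi_j(n)n^{it_j}$. This reduces the correlation to one governed by the behaviour of the $f_j$ on the small primes $p\leq x^{\epsilon}$, where the hypothesis $\mathbb{D}(f_j,\chi_j(n)n^{it_j};x)\geq 1/\epsilon$ forces a large deviation.

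\textbf{Step 1: replacing $f_j$ on the large primes.} For each $j$, write every $m\leq a_jx+h_j$ as $m=m_{\leq}m_{>}$, where $m_{\leq}$ is the $x^{\epsilon}$-smooth part and $m_{>}$ is the $x^{\epsilon}$-rough part. Set
\[
\tilde f_j(m):=f_j(m_{\leq})\,\chi_j(m_{>})\,m_{>}^{it_j}.
\]
I would bound $\frac1x\sum_{n\leq x}|f_j(a_jn+h_j)-\tilde f_j(a_jn+h_j)|$ by a Turán--Kubilius argument along the progression $a_jn+h_j$. Since $m_{>}$ has at most $1/\epsilon$ prime factors, counted with multiplicity, we can telescope $f_j(m_{>})-\chi_j(m_{>})m_{>}^{it_j}$ over them. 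The resulting expected error is controlled by
\[
\sum_{x^{\epsilon}<p\leq x}\frac{|1-f_j(p)\overline{\chi_j(p)}p^{-it_j}|}{p}.
\]
By Cauchy--Schwarz, and using $|1-z|^2\leq 2(1-\Re z)$ for $|z|\leq 1$, this is
\[
\ll \mathbb{D}(f_j,\chi_j(n)n^{it_j};x^{\epsilon},x)\Bigl(\sum_{x^{\epsilon}<p\leq x}\frac1p\Bigr)^{1/2}\ll \epsilon\Bigl(\log\frac1\epsilon\Bigr)^{1/2},
\]
which is exactly the shape of \eqrefn{eq_KMT}. Prime powers and small moduli are handled as routine error terms. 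Replacing the $f_j$ one at a time then changes the average by at most this amount.

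\textbf{Step 2: reduction to smooth parts.} Since $\chi_j(m_{>})m_{>}^{it_j}=\chi_j(m)m^{it_j}\cdot\overline{\chi_j(m_{\leq})}m_{\leq}^{-it_j}$, we get $\tilde f_j(m)=\chi_j(m)m^{it_j}\,G_j(m_{\leq})$, where $G_j:=f_j\overline{\chi_j}n^{-it_j}$ is restricted to $x^{\epsilon}$-smooth arguments. The factors $(a_jn+h_j)^{it_j}$ vary slowly, so they can be frozen on intervals of length $x/(\log x)^{C}$ after partial summation. The characters are periodic of modulus $Q=\prod_j q_j$, so we split $n$ into residue classes mod $Q$. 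After this, it suffices to show that
\[
\frac1x\sum_{\substack{n\leq x\\ n\equiv b \bmod Q}}\prod_j G_j\bigl((a_jn+h_j)_{\leq}\bigr)
\]
is small on each such class. Here, after absorbing the dependence of $\chi_j,t_j$ into $G_j$, we have
\[
\mathbb{D}(G_j,1;x^{\epsilon})^2\geq \epsilon^{-2}-\epsilon^2\gg\epsilon^{-2}
\]
for some $j$, which is the non-pretentiousness hypothesis together with the control of Step 1 on the large primes.

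\textbf{Step 3: decorrelation, the main obstacle.} The functions in Step 2 depend only on the primes $p\leq x^{\epsilon}$, so I would replace them with a probabilistic (Kubilius) model. We discard the $n$ for which some $(a_jn+h_j)_{\leq}$ exceeds $x^{1/2}$; by Dickman-type bounds $\rho(1/(2\epsilon))$ these form a negligible proportion. On the remaining $n$, the smooth parts are determined by divisibility conditions by prime powers $p^k\leq x^{1/2}$. Thus the correlation equals, up to small error, an Euler-product-type expression over $p\leq x^{\epsilon}$, obtained by computing the local densities via the Chinese remainder theorem. The condition $a_ih_j\neq a_jh_i$ guarantees that, for $p\nmid \prod_{i<j}(a_ih_j-a_jh_i)$, at most one of the $a_jn+h_j$ is divisible by $p$. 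The local factor at such $p$ is therefore $1-\frac{k}{p}+\sum_j\frac{G_j(p)}{p}+O(p^{-2})$.

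The modulus of the resulting product is
\[
\ll\exp\Bigl(-c\sum_{p\leq x^{\epsilon}}\frac{1-\Re G_j(p)}{p}\Bigr)\ll \exp(-c\epsilon^{-2})\ll\epsilon.
\]
The delicate point is making the modelling step rigorous: the sieve errors accumulate over moduli up to $x^{1/2}$ and must stay $o(1)$ uniformly in $\epsilon\geq 1/\log\log x$. I would first try a fundamental-lemma sieve truncation at $x^{\epsilon}$, combined with the Dickman cutoff above. If that turns out too lossy, the fallback is a Halász-type mean value estimate along each residue class, applied to the single non-pretentious $G_j$ after fixing the smooth parts of the other factors.
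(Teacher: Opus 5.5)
The paper gives no proof of this statement. It is quoted from Klurman--Mangerel--Ter\"av\"ainen \cite{klurman2023elliottsconjectureapplications} and used as a black box, so there is nothing internal to compare your argument with.

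On its own terms, your outline is sound and accounts for the shape of \eqrefn{eq_KMT}.
\begin{itemize}
\item The Tur\'an--Kubilius replacement on $(x^{\epsilon},x]$, followed by Cauchy--Schwarz with $|1-z|^2\leq 2(1-\Re z)$, produces exactly the term $\epsilon(\log 1/\epsilon)^{1/2}$.
\item The boundary contributions are all $O(x^{-\epsilon}+(\log x)^{-c})=O(\epsilon)$, because $\epsilon>1/\log\log x$. These are the squares of primes $p>x^{\epsilon}$, the primes in $(x,a_jx+h_j]$, the $+1$ in $\#\{n\leq x: p\mid a_jn+h_j\}\leq x/p+1$, and the freezing of $(a_jn+h_j)^{it_j}$.
\item Since $\mathbb{D}(\cdot;x^{\epsilon})^2=\mathbb{D}(\cdot;x)^2-\mathbb{D}(\cdot;x^{\epsilon},x)^2\geq\epsilon^{-2}-\epsilon^2$ for the relevant $j$, the Euler product over $p\leq x^{\epsilon}$ is $\ll\exp(-\epsilon^{-2})$. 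This is far below the target.
\end{itemize}

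Two details in Steps 2--3 need repair, though neither requires a new idea.

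\emph{The cutoff $x^{1/2}$ is too generous.} With each smooth part cut only at $x^{1/2}$, the joint modulus $d_1\cdots d_k$ of a tuple of smooth parts can be as large as $x^{k/2}$. Then for $k\geq 2$ the relevant residue classes can contain only $O(1)$ integers $n\leq x$. The sieve on the rough cofactors has no room, and the remainder terms cannot be summed. Cut each $(a_jn+h_j)_{\leq}$ at $x^{1/(4k)}$ instead.
\begin{itemize}
\item By Rankin's trick, both the proportion of discarded $n$ and the matching tail of the product (Kubilius) measure are $O(e^{-c/\epsilon})$, uniformly for $\epsilon\geq 1/\log\log x$.
\item For the surviving tuples, $d_1\cdots d_k\leq x^{1/4}$. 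The fundamental lemma of dimension $k$ at level $x^{1/4}=z^{1/(4\epsilon)}$, with $z=x^{\epsilon}$, then gives relative error $O(e^{-c/\epsilon})$ and total remainder $O(x^{1/2+o(1)})$.
\end{itemize}

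\emph{The character identity needs coprimality.} The identity $\chi_j(m_{>})=\chi_j(m)\overline{\chi_j(m_{\leq})}$ fails when $(m_{\leq},q_j)>1$. First fix the exact $p$-adic valuations of the $a_jn+h_j$ for $p\mid Q$; there are boundedly many choices up to a negligible tail. Only then split into residue classes modulo $Q$.

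With these changes the sieve route goes through, and the Hal\'asz fallback is not needed. It is also unclear that it would work: conditioning on the smooth parts of the other factors imposes sieve conditions on $n$ that Hal\'asz's theorem does not control.
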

\begin{proof}[Proof of Proposition $\ref{prop_example_rotated_liouville}$]
    To show that $f$ is non-pretentious, we observe 
    \begin{equation*}
        \mathbb{D}(f,1;\infty)^2 = \sum_{p\in \mathcal{P}_1\cup \cdots \cup \mathcal{P}_\ell}\frac{1-\Re(f(p))}{p}. 
    \end{equation*}
   Setting 
    \begin{equation*}
        m = \min_{ \substack{J\subseteq \{1,\dots,\ell\}\\ \sum_{j\in J} \alpha_j \notin \Z}} \biggl( 1-\Re\Big(e\Big(\sum_{j\in J}\alpha_j\Big)\Big) \biggl) > 0,
    \end{equation*}
    and using the assumption on the sum of reciprocal of primes, we have
    \begin{equation*}
        \mathbb{D}(f,1;\infty)^2 
        \geq m\sum_{\substack{p\in \P\\ \alpha(p) \notin \Z}}\frac{1}{p}= \infty.
    \end{equation*}
    Using the fact that any Dirichlet character $\chi$ can only take finitely many values on the primes, we can show using a similar argument that $\mathbb{D}(f,\chi;\infty) = \infty$. 
    
    Let $\mathcal{P}=\bigcup_{i=1}^{\ell}\mathcal{P}_i$, which has relative density $0$ in the primes from the assumption on $\mathcal{P}_i$. For $t\neq 0$, we see that
    \begin{align*}
        \mathbb{D}(f,\chi(n)n^{it};x)^2 &\geq \sum_{\substack{p\leq x \\ p\notin \mathcal{P}}}\frac{1-\Re\left(\overline{\chi(p)}p^{-it}\right)}{p} = \mathbb{D}(1,\chi(n)n^{it};x)^2 - \sum_{\substack{p\leq x \\ p\in \mathcal{P}}}\frac{1-\Re\left(\overline{\chi(p)}p^{-it}\right)}{p},
    \end{align*}
    where the last term is bounded as follows
    \begin{equation*}
        \sum_{\substack{p\leq x \\ p\in \mathcal{P}}}\frac{1-\Re\left(\overline{\chi(p)}p^{-it}\right)}{p} \leq 2\sum_{\substack{p\leq x \\ p \in \mathcal{P}}}\frac{1}{p}.
    \end{equation*}
    Using summation by parts together with the PNT, we see that
    \begin{align*}
        \sum_{\substack{p\leq x \\ p \in \mathcal{P}}}\frac{1}{p} = o(\log\log x).
    \end{align*}
    Finally, assuming that $\chi$ is $q$-periodic, we have
    \begin{equation*}
        \mathbb{D}(1,\chi(n)n^{it};x)^2 \geq \sum_{\substack{0<\ell < q\\ (\ell,q) =1}}\sum_{\substack{p\leq x\\p\equiv \ell \mod q}}\frac{1}{p}-\Re\left(\sum_{\substack{0< \ell< q\\ (\ell,q) =1}}\overline{\chi(\ell)}\sum_{\substack{p\leq x\\p\equiv \ell \mod q}}\frac{1}{p^{1+it}}\right).
    \end{equation*}
    Using the PNT for arithmetic progressions together with Lemma \ref{lemma_sum_pit}, we get
    \begin{equation*}
         \mathbb{D}(1,\chi(n)n^{it};x)^2 \geq \log\log x +O_{q,\ell,t}(1).
    \end{equation*}
    Putting everything together and letting $x\to \infty$ gives $\mathbb{D}(f,\chi(n)n^{it};\infty) = \infty$. Consequently, $f$ is non-pretentious.

    Now, observe that, for any $\epsilon >0$, we have
    \begin{align*}
        \mathbb{D}(f,1;x^{\epsilon},x)^2 &= \sum_{x^{\epsilon}<p\leq x} \frac{1-\Re(f(p))}{p}\leq \sum_{i=1}^{\ell}\sum_{\substack{x^{\epsilon}<p\leq x\\p\in \mathcal{P}_i}} \frac{2}{p} = o_{x\to \infty}(1),
    \end{align*}
     where we used Lemma \ref{lemma_technical_result} together with the density assumption on the sets $\mathcal{P}_1,\dots, \mathcal{P}_{\ell}$ for the last step. Therefore, there exists some function $\epsilon:= \epsilon(x)$ with values in $ (1/\log\log x, 1/2)$ converging to zero slowly enough as $x\to \infty$  so that
    \begin{equation*}
        \mathbb{D}(f,1;x^{\epsilon(x)},x)^2 = o(1).
    \end{equation*}
    An application of Theorem \ref{thm_klurman_mangerel_tera} finishes the proof.   
\end{proof}
\begin{remark}
    If the $\mathcal{P}_i$ are pairwise disjoint, $f$ does not pretend to be any of the individual factors $e(u_{\mathcal{P}_i}(n)\alpha_i)$. In fact, suppose we compare $f$ with one of its factors $g_j(n) = e(u_{\mathcal{P}_j}(n)\alpha_j)$, with $j\in \{1,\dots,\ell \}$. Then on the primes we have
    \begin{equation*}
        g_j(p) =
        \begin{cases}
            e(\alpha_j), & p\in \mathcal{P}_j,\\
            1, & p\notin \mathcal{P}_j,
        \end{cases}
    \end{equation*}
    from which we deduce that
    \begin{equation*}
        f(p)\overline{g_j(p)} = 
        \begin{cases}
            1, & p\in \mathcal{P}_j,\\
            e(\alpha_i), & p\in \mathcal{P}_i, i\neq j,\\
            1, & p\notin \bigcup_{i}\mathcal{P}_i.
        \end{cases}
    \end{equation*}
    Therefore, we have 
    \begin{equation*}
        \mathbb{D}(f,g_j;x)^2 = \sum_{i\neq j}\left(1-\cos(2\pi \alpha_i)\right)\sum_{\substack{p\leq x\\p\in \mathcal{P}_i}}\frac{1}{p},
    \end{equation*}
    which we know to diverge as $x\to \infty$ from the assumptions on the sets $\mathcal{P}_i$. If we do not assume the $\mathcal{P}_i$ to be pairwise disjoint, it is possible for $f$ to pretend to be one of its factors. Indeed, let $\mathcal{P}\subset \P$ be a subset of primes of relative natural density $0$ in the primes such that $\sum_{p\in \mathcal{P}} 1/p = \infty$. Let $\mathcal{P}_1 = \cdots = \mathcal{P}_{\ell} = \mathcal{P}$, with $\ell \geq 3$, and for any $i=1,\dots,\ell$, let $\alpha_i = 1/(\ell-1)\in \R\setminus \Z$. We have $\sum_{p\in \P, \alpha(p)\notin \Z} 1/p = \infty$, but for any primes $p$ and any $j\in \{1,\dots,\ell\}$, we have $f(p)\overline{g_j(p)}=1$, which implies $\mathbb{D}(f,g_j;x) = 0$ for any $x>0$. Hence, $f$ pretends to be each one of its factors $g_j$. In order for this observation to remain true in the case where the $\mathcal{P}_i$ are not pairwise disjoint we would need a stronger condition on the sum of reciprocal of primes $\sum_{p\in \P, \alpha(p)\notin \Z}1/p$, such as 
    \begin{equation*}
        \sum_{\substack{p\in \P\\ \alpha(p)-\alpha_j1_{\mathcal{P}_j}(p)\notin \Z}}\frac{1}{p} = \infty,
    \end{equation*}
    for all $j=1,\dots,\ell$.
\end{remark}

Theorem \ref{thm_elliott_for_functions_close_to_one} is proved in a similar way.

\begin{proof}[Proof of Theorem $\ref{thm_elliott_for_functions_close_to_one}$]
    Using Lemma \ref{lemma_technical_result}, we see that, for any $\epsilon>0$,
    \begin{equation*}
        \mathbb{D}(f,1;x^{\epsilon},x)^2 \leq \sum_{\substack{x^{\epsilon}<p\leq x\\p\in \mathcal{P}}}\frac{2}{p} = o_{x\to\infty}(1).
    \end{equation*}
    Since $f$ is non-pretentious, the conclusion immediately follows by using Theorem \ref{thm_klurman_mangerel_tera}, as we saw in the proof of Proposition \ref{prop_example_rotated_liouville}.
\end{proof}

The intuition behind the pretentious distance is that two functions $f$ and $g$ that pretend to be each other should exhibit similar statistical behaviour. In particular, if $g$ is a function that is close enough to $1$, it satisfies Elliott's conjecture, and so do all the non-pretentious multiplicative functions pretending to be $g$. 
\begin{corollary}\label{corollary_pretend_elliott}
    Let $f\colon \N \to \mathbb{U}$ be a non-pretentious multiplicative function. Assume there is a non-pretentious multiplicative function $g\colon \N \to \mathbb{U}$ such that $f$ pretends to be $g$ and $d_{\mathbb{P}}(\mathcal{P}) = 0$, where $\mathcal{P} = \{p\in \mathbb{P}: g(p) \neq 1\}$.
    Then, $f$ satisfies Elliott's conjecture.
\end{corollary}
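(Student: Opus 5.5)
The plan is to show that $\mathbb{D}(f,1;x^{\epsilon},x)=o_{x\to\infty}(1)$ for every fixed $\epsilon>0$, and then conclude exactly as in the proof of Proposition \ref{prop_example_rotated_liouville} via Theorem \ref{thm_klurman_mangerel_tera} with $\chi_j=1$ and $t_j=0$. The central tool is the triangle inequality for the pretentious distance, applied to truncated sums over $x^{\epsilon}<p\leq x$. The proof of that inequality (e.g.\ \cite[Lemma~3.1]{granvillebook}) works termwise over primes, so it holds for any range of primes. This gives
\begin{equation*}
    \mathbb{D}(f,1;x^{\epsilon},x)\leq \mathbb{D}(f,g;x^{\epsilon},x)+\mathbb{D}(g,1;x^{\epsilon},x).
\end{equation*}

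I will bound the two terms separately. For the second term, since $g(p)=1$ outside $\mathcal{P}$, we have
\begin{equation*}
    \mathbb{D}(g,1;x^{\epsilon},x)^2\leq \sum_{\substack{x^{\epsilon}<p\leq x\\ p\in\mathcal{P}}}\frac{2}{p}=o(1)
\end{equation*}
by Lemma \ref{lemma_technical_result}, as in the proof of Theorem \ref{thm_elliott_for_functions_close_to_one}. For the first term, $f$ pretends to be $g$, so $\sum_p (1-\Re(f(p)\overline{g(p)}))/p$ converges. Its tail beyond $x^{\epsilon}$ therefore tends to $0$, which gives $\mathbb{D}(f,g;x^{\epsilon},x)\leq \mathbb{D}(f,g;x^{\epsilon},\infty)=o(1)$ for fixed $\epsilon$. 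Adding the two bounds gives $\mathbb{D}(f,1;x^{\epsilon},x)=o(1)$ for each fixed $\epsilon$.

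Next I choose $\epsilon=\epsilon(x)\in(1/\log\log x,1/2)$ tending to $0$ slowly enough that $\mathbb{D}(f,1;x^{\epsilon(x)},x)\leq\epsilon(x)$; this is a standard diagonal argument. Since $f$ is non-pretentious, $\mathbb{D}(f,1;x)\to\infty$. By possibly slowing $\epsilon(x)$ further, we also get $\mathbb{D}(f,1;x)\geq 1/\epsilon(x)$. The same applies to each shifted copy $f_j=f$ with the trivial character, so Theorem \ref{thm_klurman_mangerel_tera} bounds the autocorrelation by $\ll(\log 1/\epsilon)^{1/2}\epsilon\to0$, for every $k$ and admissible $a_j,h_j$.

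The main obstacle I expect is the bookkeeping in the choice of $\epsilon(x)$. It has to satisfy both the smallness of the truncated distance and the largeness of the full distance simultaneously, while staying above $1/\log\log x$. Since $\mathbb{D}(f,1;x)\leq(2\log\log x+O(1))^{1/2}$, the requirement $1/\epsilon\leq\mathbb{D}(f,1;x)$ forces $\epsilon$ to decay no faster than the growth of $\mathbb{D}(f,1;x)$ allows. This is compatible with the lower bound $\epsilon>1/\log\log x$, and I would handle it by taking $\epsilon(x)=\max\{\delta(x),1/\mathbb{D}(f,1;x),2/\log\log x\}$, where $\delta(x)\to0$ is chosen for the truncated bound. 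The non-pretentiousness of $g$ is not needed in this argument beyond what is assumed of $f$.
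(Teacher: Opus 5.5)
Your proposal is correct and follows the paper's proof. You use the triangle inequality on $(x^{\epsilon},x]$, bound $\mathbb{D}(g,1;x^{\epsilon},x)=o(1)$ via Lemma \ref{lemma_technical_result}, treat $\mathbb{D}(f,g;x^{\epsilon},x)=o(1)$ as the tail of a convergent series, and then apply Theorem \ref{thm_klurman_mangerel_tera} with $\chi_j=1$, $t_j=0$. Your explicit choice $\epsilon(x)=\max\{\delta(x),1/\mathbb{D}(f,1;x),2/\log\log x\}$ spells out the selection of $\epsilon(x)$ that the paper leaves implicit, and it correctly secures both hypotheses of that theorem.
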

\begin{proof}
    For any $\epsilon >0$, the triangular inequality implies
    \begin{equation*}
        \mathbb{D}(f,1;x^{\epsilon},x) \leq  \mathbb{D}(f,g;x^{\epsilon},x) + \mathbb{D}(g,1;x^{\epsilon},x).
    \end{equation*}
    We know from the proof of Theorem \ref{thm_elliott_for_functions_close_to_one} that $\mathbb{D}(g,1;x^{\epsilon},x) = o_{x\to \infty}(1)$. Moreover, since $f$ pretends to be $g$, we also have $\mathbb{D}(f,g;x^{\epsilon},x) = o_{x\to \infty}(1)$, as this is the tail of a converging series. Therefore, we get $\mathbb{D}(f,1;x^{\epsilon},x) = o_{x\to \infty}(1)$ and conclude by using Theorem \ref{thm_klurman_mangerel_tera}.
\end{proof}

\section{A sparsity criterion for some non-pretentious functions}\label{sec_a_sparsity_criterion}

In this section, we investigate the limitations of Theorem \ref{thm_klurman_mangerel_tera} in proving that certain functions have vanishing autocorrelations. In particular, we prove Theorem \ref{thm_cool_criteria} where we show that non-pretentious, finitely generated, multiplicative functions $f$ with $\mathbb{D}(f,1;x^{\epsilon},x)=o_{x\to \infty}(1)$ must take values different from $1$ on a subset of primes of $0$ lower relative density.

\begin{proof}[Proof of Theorem $\ref{thm_cool_criteria}$]
    Assume to the contrary that $\underline{d_{\mathbb{P}}}(\mathcal{P}) > 0$. Since $f$ is finitely generated, we can decompose $\mathcal{P} = \bigcup_{i=1}^m \mathcal{P}_i$, where $\mathcal{P}_i =\{p\in \mathbb{P}: f(p) = h_i\}$ for $i=1,\dots,m$, and $\{h_1,\dots,h_m\}\subseteq \mathbb{U} \setminus \{1\}$ is the set of all the possible values for $f(p)$ that differ to $1$. Since $\underline{d_{\mathbb{P}}}(\mathcal{P})>0$, at least one of the $\mathcal{P}_i$'s is such that $\underline{d_{\mathbb{P}}}(\mathcal{P}_i)>0$, so assume without loss of generality that $\underline{d_{\mathbb{P}}}(\mathcal{P}_1)=c>0$.
    Now let $\epsilon := \epsilon(x) $ be a function of $x$ taking values in $(1/\log\log x, 1/2)$. We have
    \begin{equation*}
        \mathbb{D}(f,1;x^{\epsilon},x)^2 \geq (1-\Re(h_1))\sum_{\substack{x^{\epsilon} \leq p \leq x\\p \in \mathcal{P}_1}}\frac{1}{p}.
    \end{equation*}
    Similarly as in the proof of Lemma \ref{lemma_technical_result}, we let $J = (1-\epsilon)\log_2 (x)$ and decompose
    \begin{equation*}
        (x^{\epsilon},x] = \bigcup_{j=0}^{J-1}\left(\frac{x}{2^{j+1}}, \frac{x}{2^j}\right],
    \end{equation*}
    so that
    \begin{equation*}
        (1-\Re(h_1))\sum_{\substack{x^{\epsilon} \leq p \leq x\\p \in \mathcal{P}_1}}\frac{1}{p} =(1-\Re(h_1)) \sum_{j=0}^{J-1}\sum_{\substack{p\in I_j\\p\in \mathcal{P}_1}}\frac{1}{p},
    \end{equation*}
    where $I_j = \left(\frac{x}{2^{j+1}}, \frac{x}{2^j}\right]$ for $j=0,\dots, J-1$. Using now the lower density assumption together with the PNT, we see that, for $x$ large enough, we have
    \begin{align*}
        \sum_{\substack{p\in I_j\\p\in \mathcal{P}_1}}\frac{1}{p} &\geq \frac{2^j}{x}|\{p\in I_j: p\in \mathcal{P}_1\}|\gg_c\frac{2^j}{x}\cdot \pi(x/2^j) \sim \frac{1}{\log x/2^j}.
    \end{align*}
        We then have
        \begin{equation*}
            \mathbb{D}(f,1;x^{\epsilon},x)^2 \gg_c (1-\Re(h_1))\log 1/\epsilon,
        \end{equation*}
        as $x\to \infty$, and the conclusion follows.
    \end{proof}

    If the density $d_{\mathbb{P}}(\mathcal{P})$ of the subset $\mathcal{P}\subset \mathbb{P}$ as defined in Theorem~\ref{thm_cool_criteria} exists, the previous result gives the equivalence between $d_{\mathbb{P}}(\mathcal{P})=0$ and $\mathbb{D}(f,1;x^{\epsilon},x)=o(1)$, for some $ \epsilon:= \epsilon(x)$ taking values in $ (1/\log\log x, 1/2)$ converging to zero slowly enough.

If there exist functions $f$ with $d_{\mathbb P}(\mathcal P')\neq0$, where $\mathcal P'=\{p\in\mathbb P:f(p)\neq1\}$, that pretend to be sufficiently close to $1$, then Corollary \ref{corollary_pretend_elliott} would extend the class of functions known to satisfy Elliott's conjecture. This raises the question of whether such functions exist. As a consequence of Theorem \ref{thm_cool_criteria}, the answer is negative in the finitely generated case.

    \begin{corollary}
    Let $f\colon \N \to \mathbb{U}$ be a non-pretentious, finitely generated multiplicative function. Assume there exists a non-pretentious multiplicative function $g\colon \N \to \mathbb{U}$ with $d_{\mathbb{P}}(\mathcal{P}) = 0$, where $\mathcal{P}=\{p\in \mathbb{P}: g(p) \neq 1\}$ such that $f$ pretends to be $g$. Then, if $\mathcal{P}' =\{p\in \mathbb{P}: f(p)\neq 1\}$, we have $\underline{d_{\mathbb{P}}}(\mathcal{P}')=0$. In particular, if the relative density of $\mathcal{P}'$ in the primes exists, it is $0$.
\end{corollary}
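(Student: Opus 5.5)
The plan is to reduce the statement to Theorem \ref{thm_cool_criteria} applied to $f$. That theorem needs $\mathbb{D}(f,1;x^{\epsilon},x)=o_{x\to\infty}(1)$ for some admissible $\epsilon=\epsilon(x)$ with values in $(1/\log\log x,1/2)$. The hypotheses on $f$ that Theorem \ref{thm_cool_criteria} requires, namely non-pretentious and finitely generated, are assumed directly.

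\emph{Step 1: fixed $\epsilon$.} Fix $\epsilon\in(0,1/2)$. By the triangle inequality for the pretentious distance, which also holds for the truncated sums because it is a pointwise-over-primes statement,
\begin{equation*}
\mathbb{D}(f,1;x^{\epsilon},x)\leq \mathbb{D}(f,g;x^{\epsilon},x)+\mathbb{D}(g,1;x^{\epsilon},x).
\end{equation*}
The first term is bounded by the tail $\bigl(\sum_{p>x^{\epsilon}}(1-\Re(f(p)\overline{g(p)}))/p\bigr)^{1/2}$ of a convergent series, since $f$ pretends to be $g$. Hence it tends to $0$. The second term satisfies
\begin{equation*}
\mathbb{D}(g,1;x^{\epsilon},x)^2\leq 2\sum_{\substack{x^{\epsilon}<p\leq x\\ p\in\mathcal{P}}}\frac1p,
\end{equation*}
which is $o(1)$ by Lemma \ref{lemma_technical_result}, since $d_{\mathbb{P}}(\mathcal{P})=0$. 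This is exactly the argument of Corollary \ref{corollary_pretend_elliott}.

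\emph{Step 2: admissible $\epsilon(x)$.} This is the only mildly delicate point. Step 1 gives $o(1)$ for each fixed $\epsilon$, but Theorem \ref{thm_cool_criteria} asks for a function $\epsilon(x)$. Either choice works.

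\begin{itemize}
\item We may take $\epsilon(x)\equiv 1/4$. For large $x$ this lies in $(1/\log\log x,1/2)$, so Step 1 already suffices.
\item Alternatively, we use a diagonal argument. For each $k\geq 3$ choose $x_k$ increasing so that $\mathbb{D}(f,1;x^{1/k},x)<1/k$ for all $x\geq x_k$. Then set $\epsilon(x)=1/k$ on $[x_k,x_{k+1})$, taking $x_k$ large enough that $1/k>1/\log\log x$ there.
\end{itemize}

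\emph{Step 3: conclusion.} Theorem \ref{thm_cool_criteria} now applies to $f$ with this $\epsilon(x)$ and gives $\underline{d_{\mathbb{P}}}(\mathcal{P}')=0$.

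For the final claim, suppose $d_{\mathbb{P}}(\mathcal{P}')$ exists. Then it equals the lower density $\underline{d_{\mathbb{P}}}(\mathcal{P}')$, which is $0$.

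I expect no real obstacle. The only care needed is to confirm that the triangle inequality holds for the truncated distance, and to handle the quantifier on $\epsilon(x)$ as in Step 2.
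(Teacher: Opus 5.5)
Your proposal is correct and follows the paper's own route. The paper derives this corollary directly from Theorem \ref{thm_cool_criteria}, obtaining $\mathbb{D}(f,1;x^{\epsilon},x)=o(1)$ exactly as in the proof of Corollary \ref{corollary_pretend_elliott}: the triangle inequality, the tail of the convergent series $\mathbb{D}(f,g;\infty)$, and Lemma \ref{lemma_technical_result} applied to $g$. Your first option in Step 2, the constant choice $\epsilon\equiv 1/4$, already suffices, so the diagonal construction is not needed.
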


Since usually the behaviour of infinitely generated multiplicative functions seems to be completely different from the finitely generated ones, one can expect the previous results to fail in this case, leading us to the
following.
\begin{proposition}\label{prop_example_infinite_generated}
    There exists an infinitely generated non-pretentious multiplicative function $f\colon \N\to \mathbb{U}$ such that if we let $\mathcal{P}=\{p\in \P : f(p) \neq 1\}$, then $\underline{d_{\P}}(\mathcal{P}) >0$ and, for any $\epsilon := \epsilon(x)$ taking values in $(1/\log\log x, 1/2)$, we have $\mathbb{D}(f,1;x^{\epsilon},x) = o_{x\to \infty}(1)$.
\end{proposition}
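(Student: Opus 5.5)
The plan is to take $f$ \emph{completely multiplicative} with $f(p)=e(\theta_p)$, where $\theta_p\in(0,1/2)$ tends to $0$ slowly. Concretely, I would set
\begin{equation*}
\theta_p = \frac{1}{\sqrt{\log\log p}}\quad (p\geq 17),\qquad \theta_p=\tfrac14\quad (p<17).
\end{equation*}
Then $f(p)\neq 1$ for every prime, so $\mathcal{P}=\mathbb{P}$ and $\underline{d_{\P}}(\mathcal{P})=1>0$. Since $p\mapsto\theta_p$ is eventually injective, the set $\{f(p)\}$ is infinite and $f$ is infinitely generated. Using $1-\cos(2\pi\theta)\asymp\theta^2$ for small $\theta$, there remain two things to check: the truncated distance is small, and $f$ is non-pretentious.

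\emph{Truncated distance.} Let $\epsilon\in(1/\log\log x,1/2)$. Every prime $p>x^{\epsilon}$ satisfies $\log\log p\geq \log\log x+\log\epsilon\geq \log\log x-\log\log\log x$. Hence
\begin{equation*}
\mathbb{D}(f,1;x^{\epsilon},x)^2\ll \frac{1}{\log\log x-\log\log\log x}\sum_{x^{\epsilon}<p\leq x}\frac1p .
\end{equation*}
By Mertens' theorem, or the computation at the end of the proof of Lemma \ref{lemma_technical_result}, the last sum is $\log(1/\epsilon)+O(1/\log x)\leq \log\log\log x+O(1)$. The whole expression is therefore $\ll \log\log\log x/\log\log x=o(1)$, uniformly in the admissible $\epsilon$. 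The key point is that the range $\epsilon>1/\log\log x$ limits the loss from $\log(1/\epsilon)$ to a triple logarithm.

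\emph{Non-pretentiousness.} First take $\chi=\chi_0$ principal and $t=0$. Partial summation with the PNT (substituting $v=\log\log u$) gives
\begin{equation*}
\mathbb{D}(f,\chi_0;x)^2\gg\sum_{17\leq p\leq x,\ p\nmid q}\frac{1}{p\log\log p}\asymp \int^{\log\log x}\frac{dv}{v}=\log\log\log x\to\infty .
\end{equation*}
The same computation shows $\mathbb{D}(f,1;x)\ll\sqrt{\log\log\log x}$. In the remaining cases ($\chi$ non-principal or $t\neq0$) I would use the triangle inequality:
\begin{equation*}
\mathbb{D}(f,\chi(n)n^{it};x)\geq \mathbb{D}(1,\chi(n)n^{it};x)-\mathbb{D}(f,1;x).
\end{equation*}
It suffices to show $\mathbb{D}(1,\chi(n)n^{it};x)^2\geq c\log\log x+O(1)$ for some $c>0$. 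For $t\neq 0$ this is exactly the estimate obtained in the proof of Proposition \ref{prop_example_rotated_liouville} via Lemma \ref{lemma_sum_pit}. For $t=0$ and $\chi$ non-principal, the PNT in arithmetic progressions gives
\begin{equation*}
\sum_{p\leq x}\frac{\chi(p)}{p}=\frac{\log\log x}{\varphi(q)}\sum_{(\ell,q)=1}\chi(\ell)+O_q(1)=O_q(1),
\end{equation*}
using orthogonality. Hence $\mathbb{D}(1,\chi;x)^2=\log\log x+O_q(1)$. Since $\log\log x$ dominates $\log\log\log x$, the distance tends to infinity in every case, so $f$ is non-pretentious.

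The step I expect to be the main obstacle is choosing the decay rate of $\theta_p$ correctly. Two constraints pull in opposite directions. The series $\sum\theta_p^2/p$ must diverge, which is what the $\chi_0$, $t=0$ case needs. At the same time, its increments over $[x^{1/\log\log x},x]$ must vanish. With $\theta_p^2=1/\log\log p$ both hold only barely: the series diverges like $\log\log\log x$, while the increment is $\log\log\log x/\log\log x$. I would verify this balance carefully, including uniformity in $\epsilon$. The remaining steps reuse computations already in the paper.
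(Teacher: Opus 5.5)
Your proposal is correct and follows essentially the same route as the paper: the same choice $\theta_p=1/\sqrt{\log\log p}$, the same divergence $\mathbb{D}(f,1;x)^2\asymp\log\log\log x$, and the same estimate of order $\log(1/\epsilon)/\log\log x$ for $\mathbb{D}(f,1;x^{\epsilon},x)^2$. Your version is slightly more careful than the paper's in two places: you repair the definition of $\theta_p$ at $p=2$, where $\log\log p<0$, and you make the twisted-character case explicit via $\mathbb{D}(f,\chi(n)n^{it};x)\geq\mathbb{D}(1,\chi(n)n^{it};x)-\mathbb{D}(f,1;x)$, which the paper only sketches (two harmless slips: for moderate $p$ one only has $\theta_p\in(0,1)$, not $(0,1/2)$, and the Mertens error term is $O(1/(\epsilon\log x))$ rather than $O(1/\log x)$).
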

\begin{proof}
    For any primes $p\in \P$, let $\theta_p = 1/\sqrt{\log\log p}$. We define $f \colon \N \to \mathbb{U}$ as follows. For every prime $p\in \P$, we let $f(p) = e(\theta_p)$, and multiplicatively extend $f$ by letting $f(n) = e\left(\sum_{p^k \Vert n} \theta_p\right)$ for all $n\in \N$. Observe that $f(p) \neq 1$ for all $p\in \P$, hence $\underline{d_{\P}}(\mathcal{P})=1$. Now, for any large enough $x>0$, we have
    \begin{align*}
        \mathbb{D}(f,1;x)^2 &= \sum_{p\leq x} \frac{1-cos(2\pi\theta_p)}{p} = \sum_{p\leq x} \frac{2\pi^2\theta_p^2}{p}+O\left(\frac{\theta_p^4}{p}\right)
        = \sum_{p\leq x}\frac{2\pi^2}{p\log\log p}+O\left(\frac{1}{\log\log x}\right).
    \end{align*}
    Using summation by parts and the PNT, for $x$ large enough, we get 
    \begin{equation*}
        \sum_{p\leq x}\frac{1}{p\log\log p} = \log\log\log x +O\left(\frac{1}{\log\log x}\right).
    \end{equation*}
    It immediately follows that $f$ is non-pretentious since $\mathbb{D}(f,1;x) \to \infty $ as $x\to \infty$. Observe that one can show that $\mathbb{D}(f,\chi(n)n^{it},x) \to \infty$ for non-trivial twisted Dirichlet characters using a similar argument as in the proof of Proposition \ref{prop_example_rotated_liouville} together with the fact that, for $p$ large enough, one has $f(p) \sim 1$. Moreover, the previous calculations imply that whenever $\epsilon \in (1/\log\log x, 1/2)$, we have
    \begin{align*}
        \mathbb{D}(f,1;x^{\epsilon},x)^2 &= -2\pi^2\log\left(1+\frac{\log\epsilon}{\log\log x}\right) +O\left(\frac{1}{\log\log x}\right)\sim \frac{2\pi^2\log(1/\epsilon)}{\log\log x} = o_{x\to \infty}(1),
    \end{align*}
    which concludes the proof.
\end{proof}
Using a similar idea, we give an example of an infinitely generated, non-pretentious multiplicative function for which Corollary \ref{corollary_pretend_elliott} does not hold.
\begin{proposition}\label{prop_example_infinitely_generated_bis}
    There exists an infinitely generated, non-pretentious multiplicative function $f\colon\N\to \mathbb{U}$ such that it pretends to be some non-pretentious multiplicative function $g\colon \N \to \mathbb{U}$ with $d_{\mathbb{P}}(\mathcal{P}) = 0$, where $\mathcal{P}=\{p\in \mathbb{P}: g(p) \neq 1\}$ but, if we let $\mathcal{P}' =\{p\in \mathbb{P}: f(p)\neq 1\}$, we have $\underline{d_{\mathbb{P}}}(\mathcal{P}')>0$.
\end{proposition}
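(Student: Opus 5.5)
We plan to build $f$ as a product $f = g\cdot h$ of two completely additive-type exponentials. The factor $g$ is a Liouville-like rotation supported on a sparse set of primes. The factor $h$ is a small, slowly decaying rotation on all primes, as in the proof of Proposition \ref{prop_example_infinite_generated}, but decaying fast enough that $\sum_p (1-\cos(2\pi\theta_p))/p<\infty$.

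\textbf{Construction.} Fix a set $\mathcal{P}\subset\P$ with $d_{\P}(\mathcal{P})=0$ and $\sum_{p\in\mathcal{P}}1/p=\infty$; for instance, take the primes in $\bigcup_k [e^{k^2}, e^{k^2}e^{k}]$, or any set of the kind used in Theorem \ref{thm_klurman_mangerel_tera_liouville_like}. Define $g(n)=(-1)^{\Omega_{\mathcal{P}}(n)}$, so that $g(p)=-1$ for $p\in\mathcal{P}$ and $g(p)=1$ otherwise. By Proposition \ref{prop_example_rotated_liouville} (with $\ell=1$, $\alpha_1=1/2$), $g$ is non-pretentious. Next put $\theta_p = 1/\log p$, so that $\theta_p\notin\Z$ and $\theta_p\to0$. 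Define $f(p)=g(p)e(\theta_p)$, extended by $f(n)=g(n)\,e(\sum_{p^k\|n}k\theta_p)$, which is multiplicative with values in $\mathbb{U}$. Since the $\theta_p$ are pairwise distinct and irrational-free of periodicity, $f$ takes infinitely many values on the primes, so $f$ is infinitely generated. Moreover $f(p)\neq1$ for every prime $p$, because $e(\theta_p)\neq\pm1$ for large $p$ (we may set $\theta_p=1/\log p$ only for $p\geq 3$). Hence $\mathcal{P}'$ contains all sufficiently large primes and $\underline{d_{\P}}(\mathcal{P}')=1>0$.

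\textbf{Pretension.} We compute
\[
\mathbb{D}(f,g;\infty)^2=\sum_p\frac{1-\cos(2\pi\theta_p)}{p}\ll\sum_p\frac{1}{p(\log p)^2}<\infty,
\]
so $f$ pretends to be $g$.

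\textbf{Non-pretentiousness of $f$.} This is the main step to check. Suppose $\mathbb{D}(f,\chi(n)n^{it};\infty)<\infty$ for some twisted character. The triangle inequality, together with $\mathbb{D}(f,g;\infty)<\infty$, would give $\mathbb{D}(g,\chi(n)n^{it};\infty)<\infty$. This contradicts the non-pretentiousness of $g$ established in Proposition \ref{prop_example_rotated_liouville}. So the only delicate point is that the triangle inequality must hold for the infinite distances; it does, because it holds for each $x$ and we may let $x\to\infty$. Non-pretentiousness of $f$ then follows immediately, and the proposition is proved. The remaining content is just verifying that $g$ is non-pretentious, which we take from Proposition \ref{prop_example_rotated_liouville} via the hypothesis $\sum_{p\in\mathcal{P}}1/p=\infty$ and $d_{\P}(\mathcal{P})=0$. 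The only genuine requirement on the construction is that $\mathcal{P}$ exists, which is standard: for the block example one checks $\sum_{p\in\mathcal{P}}1/p\approx\sum_k \log(1+k/k^2)\approx\sum_k 1/k=\infty$ by Mertens, while the relative density tends to $0$ since each block is tiny compared to $e^{(k+1)^2}$.
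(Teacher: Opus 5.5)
Your construction and argument are the paper's own proof: $g=\lambda_{\mathcal{P}}$ for a sparse $\mathcal{P}$, $f(p)=g(p)e(\theta_p)$ with $\theta_p=1/\log p$ extended completely multiplicatively, $\mathbb{D}(f,g;\infty)^2=\sum_p(1-\cos(2\pi\theta_p))/p<\infty$, and non-pretentiousness of $f$ obtained from that of $g$ by the triangle inequality. That part is fine.

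The one step that fails is your concrete example $\mathcal{P}=\mathbb{P}\cap\bigcup_k[e^{k^2},e^{k^2+k}]$: it does not have relative density $0$. Each block is long on the multiplicative scale, so at $N=e^{k^2+k}$ it contains almost all primes up to $N$:
\[
\frac{\pi_{\mathcal{P}}(N)}{\pi(N)}\ \geq\ 1-\frac{\pi(e^{k^2})}{\pi(e^{k^2+k})}\ \approx\ 1-e^{-k}\ \longrightarrow\ 1 .
\]
Hence $\overline{d_{\mathbb{P}}}(\mathcal{P})=1$, and $d_{\mathbb{P}}(\mathcal{P})$ does not exist. Your remark that each block is tiny compared with $e^{(k+1)^2}$ only shows the ratio is small near $N\approx e^{(k+1)^2}$, i.e.\ that the lower density is $0$. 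This is the same phenomenon as in Lemma~\ref{lemma_example_subset_primes}. So this $g$ violates the requirement $d_{\mathbb{P}}(\mathcal{P})=0$ of the statement, and Proposition~\ref{prop_example_rotated_liouville} cannot be invoked for it.

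To get density $0$ together with $\sum_{p\in\mathcal{P}}1/p=\infty$, the blocks must have multiplicative length tending to $1$. For example, take $\mathcal{P}=\mathbb{P}\cap\bigcup_{k\geq 3}(e^k,(1+1/\log k)e^k]$. The PNT in the form $\pi(y(1+\delta))-\pi(y)\sim\delta y/\log y$, with $y=e^k$ and $\delta=1/\log k$, gives two facts:
\begin{itemize}
\item the $k$-th block contributes $\asymp 1/(k\log k)$ to $\sum 1/p$, so the sum diverges;
\item for $N\in(e^K,e^{K+1}]$ the proportion of primes up to $N$ lying in $\mathcal{P}$ is $\ll 1/\log K$, so the density is $0$.
\end{itemize}
Note that Mertens' error term $O(1/\log y)$ is too weak to see the block contributions. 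Alternatively, do as the paper does and take any $\mathcal{P}$ as in Theorem~\ref{thm_klurman_mangerel_tera_liouville_like}. With either fix the proof is complete.
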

\begin{proof}
    Fix $\mathcal{P}\subset \P$ to be a subset of primes such that $d_{\P}(\mathcal{P})=0$ and $\sum_{p\in \mathcal{P}} \frac{1}{p}=\infty$. Let $g\colon \N \to \mathbb{U}$ be the Liouville-like function $\lambda_{\mathcal{P}}$. Then, $\mathcal{P}=\{p\in \P : g(p) \neq 1\}$ has relative density $0$. Moreover, we know (see \cite[Thm~2.1]{klurman2023elliottsconjectureapplications}) that $g$ is non-pretentious.
    
    Now, choose positive real numbers $\theta_p$ that converge to zero as $p\to \infty$ satisfying
    \begin{equation*}
        \sum_{p\in \P}\frac{\theta_p^2}{p}<\infty,
    \end{equation*}
    but $\theta_p \neq 0$ for all $p\in \P$. For instance, one can take $\theta_p = 1/\log p$. Define $f\colon\N\to \mathbb{U}$ completely multiplicatively by $f(p) = g(p)e(\theta_p)$. We see that $f(p) \neq 1$ for all $p\in \P$. Therefore, if we define $\mathcal{P}' = \{p\in \P : f(p) \neq 1\}$, we clearly have $\underline{d_{\P}}(\mathcal{P}') = 1$. Yet $f$ pretends to be $g$. Indeed,
    \begin{align*}
        \mathbb{D}(f,g;x)^2 &=\sum_{p\leq x} \frac{1-cos(2\pi\theta_p)}{p}\asymp \sum_{p\leq x} \frac{2\pi^2\theta_p^2}{p}.
    \end{align*}
    Thus, we conclude by the choice of $\theta_p$ that $\mathbb{D}(f,g,\infty)<\infty$. Finally, $f$ is also non-pretentious, this follows immediately from the triangle inequality. Indeed, if $f$ pretended to be some twisted Dirichlet character $\chi(n)n^{it}$, then
    \begin{equation*}
        \mathbb{D}(g,\chi(n)n^{it}) \leq \mathbb{D}(g,f)+\mathbb{D}(f,\chi(n)n^{it}),
    \end{equation*}
    which would remain bounded, contradicting the non-pretentiousness of $g$.
\end{proof}

\section{A counterexample to the converse of Theorem \ref{thm_cool_criteria}}\label{sec_a_counter_example_to_the_converse_of}

The goal of this section is to show that the converse of Theorem $\ref{thm_cool_criteria}$ does not hold in general. The underlying idea is that the condition $\underline{d_{\P}}(\mathcal{P})=0$ alone does not provide sufficient control over the distribution of $\mathcal{P}$ within the primes.
We first give, in Theorem \ref{thm_big_goal_example}, an explicit example of a set $\mathcal{P}\subset \P$ with logarithmic relative density $d^{\log}_{\P}(\mathcal{P})=0$ for which the associated Liouville-like function likewise fails to locally pretend to be a twisted Dirichlet character. Thus, even the stronger sparsity condition $d^{\log}_{\P}(\mathcal{P})=0 $, motivated by \eqrefn{eq_densities}, is insufficient to guarantee the desired local pretentiousness.

The following construction provides the basis for our subsequent example.

\begin{lemma}\label{lemma_example_subset_primes}
There exists a set $\mathcal{P}\subseteq \mathbb{P}$ such that
\[
\textnormal{(i)}\ d_{\mathbb{P}}^{\log}(\mathcal{P})=0,
\qquad
\textnormal{(ii)}\ \overline{d_{\mathbb{P}}}(\mathcal{P})>0,
\qquad
\textnormal{(iii)}\ \sum_{p\in\mathcal{P}}\frac{1}{p}=\infty.
\]
\end{lemma}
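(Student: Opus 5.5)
We will build $\mathcal{P}$ as a union of blocks of consecutive primes placed at very sparse scales. Fix a rapidly increasing sequence $x_1<x_2<\cdots$, say with $\log\log x_{k+1}\geq 2^{k}\log\log x_k$ (for instance $x_{k+1}=\exp\exp\big((\log x_k)^{2}\big)$ suffices for large $k$). Set
\begin{equation*}
    \mathcal{P}=\bigcup_{k\geq 1}\mathcal{B}_k,\qquad \mathcal{B}_k=\{p\in\P : x_k/2<p\leq x_k\}.
\end{equation*}
Each block contains, by the PNT, about $\pi(x_k)/2$ primes, and its contribution to the reciprocal sum is
\begin{equation*}
    \sum_{p\in\mathcal{B}_k}\frac{1}{p}=\log\log x_k-\log\log(x_k/2)+o(1)\asymp \frac{1}{\log x_k},
\end{equation*}
which is summable. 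So dyadic blocks are too thin for (iii). We therefore widen them: take $\mathcal{B}_k=\{p: x_k^{1-\delta_k}<p\leq x_k\}$ with $\delta_k\to0$ chosen so that $\sum_{p\in\mathcal{B}_k}1/p=\log\frac{1}{1-\delta_k}+o(1)\asymp\delta_k$, and choose $\delta_k=1/k$. Then $\sum_k\delta_k=\infty$ gives (iii).

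For (ii), note that at $N=x_k$ we have
\begin{equation*}
    \pi_{\mathcal{P}}(x_k)\geq \pi(x_k)-\pi(x_k^{1-\delta_k})=\pi(x_k)\big(1-o(1)\big),
\end{equation*}
since $\pi(x_k^{1-\delta_k})/\pi(x_k)\ll x_k^{-\delta_k}\to0$ provided $\delta_k\log x_k\to\infty$. This holds with $\delta_k=1/k$ once $x_k$ grows fast. Hence in fact $\overline{d_\P}(\mathcal{P})=1>0$.

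For (i), we need $\frac{1}{\log\log x}\sum_{p\leq x,\,p\in\mathcal{P}}1/p\to0$ for all $x$, not just along $x_k$. For any $x$, with $k$ the largest index such that $x_k^{1-\delta_k}<x$, we have
\begin{equation*}
    \sum_{p\leq x,\,p\in\mathcal{P}}\frac{1}{p}\leq \sum_{j\leq k}\big(\delta_j+O(1/\log x_j)\big)\ll \log k,
\end{equation*}
while $\log\log x\geq(1-\delta_k)\log\log x_k$, up to $O(1)$, which tends to infinity much faster than $\log k$ given the growth of $x_k$ (already $x_k\geq \exp\exp(k)$ gives $\log\log x\gg k$). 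Hence the ratio is $\ll(\log k)/k\to0$, and the logarithmic density exists and equals $0$.

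The main obstacle is reconciling (ii) with (i): upper natural density must be positive at the scales $x_k$ while the logarithmic mass stays negligible. This is exactly handled by making each block occupy multiplicatively almost all of $[1,x_k]$ in prime count (an interval $(x_k^{1-\delta_k},x_k]$) but only a $\delta_k$-fraction on the $\log\log$ scale. The only care needed is checking $\delta_k\log x_k\to\infty$ and the uniformity of the bound for arbitrary $x$ between blocks, both of which follow from the rapid growth of $x_k$ together with the PNT via summation by parts, as in Lemma \ref{lemma_technical_result}.
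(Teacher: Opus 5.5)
Your proof is correct and uses the same construction as the paper: a union of prime intervals ending at very sparse scales, each containing almost all primes below its endpoint (so $\overline{d_{\P}}(\mathcal{P})=1$) while occupying a negligible part of the $\log\log$ scale, with the PNT and Mertens' theorem doing the bookkeeping. The real difference is the block width. The paper's blocks $(a_n^{1/\log\log a_n},a_n]$ each contribute $\log\log\log a_n\to\infty$ to $\sum 1/p$, which makes (iii) immediate but needs the very fast growth $a_n=n^{n^{n^3}}$ for (i). Your blocks $(x_k^{1-1/k},x_k]$ contribute about $1/k$ each, so (iii) comes from the harmonic series and (i) already holds with mild growth such as $x_k=\exp\exp k$. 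You also check (i) for every $x$ rather than only along the sequence, which the paper leaves implicit; take $\delta_k=1/(k+1)$ to avoid the degenerate block at $k=1$.
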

\begin{proof}
    Let $(a_n)_{n\in \N}$ the sequence defined by $a_n = n^{n^{n^3}}$, $n\in \N$. Let $\mathcal{P}\subset  \P$ be the subset of primes defined by $\mathcal{P} = \{p\in \mathbb{P} : \exists n\in \N$ s.t $a_n^{1/\log\log a_n}<p\leq a_n\}$. We claim that $\mathcal{P}$ satisfies all three properties of the lemma.

    We start by showing that $\overline{d_{\mathbb{P}}}(\mathcal{P}) > 0$. We have for any $n\in \N$
    \begin{equation*}
        \frac{\pi_{\mathcal{P}}(a_n)}{\pi(a_n)}\geq 1-\frac{\pi(a_n^{\frac{1}{\log\log a_n}})}{\pi(a_n)}.
    \end{equation*}
    Now, we see via the PNT that for $n$ large enough 
    \begin{align*}
        \frac{\pi(a_n^{\frac{1}{\log\log a_n}})}{\pi(a_n)} &\sim a_n^{\frac{1}{\log\log a_n}-1}\log\log a_n = \left(n^{n^{n^3}}\right)^{\frac{1}{n^3\log n + \log\log n}-1}(n^3\log n +\log\log n) \xrightarrow{n\to\infty} 0.
    \end{align*}
    Hence, it follows that 
    \begin{align*}
        \overline{d_{\mathbb{P}}}(\mathcal{P}) &= \limsup_{x\to \infty} \frac{\pi_{\mathcal{P}}(x)}{\pi(x)}= \lim_{n\to \infty }\frac{\pi_{\mathcal{P}}(a_n)}{\pi(a_n)} =1.
    \end{align*}

    We now argue that $\sum_{p\in \mathcal{P}} \frac{1}{p}=\infty$. Notice that 
    \begin{equation*}
        \sum_{p\in \mathcal{P}}\frac{1}{p} = \sum_{n\in \N}\sum_{a_n^{\frac{1}{\log\log a_n}}<p \leq a_n}\frac{1}{p},
    \end{equation*}
    and using Merten's theorem (see e.g. \cite[Thm.~I.1.10]{tenenbaum1995introduction}), we have for large enough $n$
    \begin{align*}
        \sum_{a_n^{\frac{1}{\log\log a_n}}<p \leq a_n}\frac{1}{p}&= \log\log a_n - \log\left(\frac{\log a_n}{\log\log a_n}\right) +O\left(\frac{1}{\log a_n^{\frac{1}{\log\log a_n}}}\right)\\
        &= \log\log\log a_n +O\left(\frac{1}{\log a_n^{\frac{1}{\log\log a_n}}}\right) \xrightarrow{n\to \infty} \infty,  
    \end{align*}
    implying that $\sum_{p\in\mathcal{P}}\frac{1}{p} = \infty$.

    Finally, we show $d^{\log}_{\mathbb{P}}(\mathcal{P}) = 0$. We first observe that
    \begin{align*}
        d^{\log}_{\mathbb{P}}(\mathcal{P}) &\leq \lim_{N\to \infty} \frac{1}{\log\log a_N}\sum_{\substack{p\leq a_N\\ p\in \mathcal{P}}}\frac{1}{p}.
    \end{align*}
    Moreover, for any $N\in \N$, one has
    \begin{align*}
         \frac{1}{\log\log a_N}\sum_{\substack{p\leq a_N\\ p\in \mathcal{P}}}\frac{1}{p} &=  \frac{1}{\log\log a_N}\sum_{n\leq N}\sum_{a_n^{\frac{1}{\log\log a_n}}<p\leq a_n}\frac{1}{p}.
    \end{align*}
    Using the previous calculation for the inner sum when $N$ is large enough, one has
  
    \begin{align*}
        \frac{1}{\log\log a_N}\sum_{\substack{p\leq a_N\\ p\in \mathcal{P}}}\frac{1}{p} &= \frac{1}{\log\log a_N}\sum_{n\leq N}\left( \log\log\log a_n + O\left(\frac{\log \log a_n}{\log a_n}\right)\right)\ll \frac{N\log\log\log a_N}{\log\log a_N}.
    \end{align*}
    We now observe that
    \begin{align*}
        \frac{N\log\log\log a_N}{\log\log a_N} &= \frac{N\log\log\log N^{N^{N^3}}}{\log\log N^{N^{N^3}}}\ll \frac{N\log(2N^3\log N)}{N^3\log N +\log\log N}\xrightarrow{N\to \infty} 0,
    \end{align*}
    concluding the proof of the fact that $d^{\log}_{\mathbb{P}}(\mathcal{P}) = 0$.   
\end{proof}
    Observe from \eqrefn{eq_densities} that the condition $d^{\log}_{\P}(\mathcal{P})=0$ implies $\underline{d_{\P}}(\mathcal{P})=0$. Moreover, picking any strictly increasing sequence of integers $(a_n)_{n\in \N}$ such that $a_n \gg n^{n^{n^3}}$ would give rise to a set $\mathcal{P}=\{p\in \P : \exists n\in\N$ s.t $a_n^{1/\log\log a_n}<p\leq a_n\}$ satisfying the properties stated in Lemma \ref{lemma_example_subset_primes}.

We now argue that for a subset of this form, the associated Liouville-like function cannot locally be pretentious.
\begin{theorem}\label{thm_big_goal_example}
    Let $(a_n)_{n\in \N}$ be any strictly increasing sequence of integers such that $a_n \gg n^{n^{n^3}}$, let $\mathcal{P} = \{p\in \P: \exists n\in \N$ s.t. $a_n^{1/\log\log a_n} < p\leq a_n\}$, and let $f\colon \N \to \mathbb{U}$ be defined by $f(n) = \lambda_{\mathcal{P}}(n)$. Then, $f$ is a non-pretentious multiplicative function such that we have, for any Dirichlet character $\chi$, real number $t$, and $\epsilon := \epsilon(N)$ taking values in $(1/(\log\log a_N), 1/2)$ that $\mathbb{D}(f,\chi(n)n^{it};a_N^{\epsilon},a_N) \gg \log 1/\epsilon$ as $N\to \infty$.
\end{theorem}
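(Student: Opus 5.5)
The approach is to exploit that, for large $N$, the whole interval $(a_N^{\epsilon},a_N]$ lies inside one block of $\mathcal{P}$. Since $\epsilon > 1/\log\log a_N$, we have $a_N^{\epsilon} > a_N^{1/\log\log a_N}$, so every prime $p\in(a_N^{\epsilon},a_N]$ belongs to $\mathcal{P}$ and satisfies $f(p)=\lambda_{\mathcal{P}}(p)=-1$. Therefore
\begin{equation*}
    \mathbb{D}(f,\chi(n)n^{it};a_N^{\epsilon},a_N)^2 = \sum_{a_N^{\epsilon}<p\leq a_N}\frac{1+\Re\bigl(\chi(p)p^{-it}\bigr)}{p}.
\end{equation*}
The task then reduces to a lower bound for this sum uniformly in the admissible $\epsilon$. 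I will prove the squared form $\mathbb{D}^2 \geq \log(1/\epsilon) + o(1)$, which gives $\mathbb{D}^2\gg \log 1/\epsilon$ because $\log(1/\epsilon)\geq \log 2$. This squared bound is the natural reading of the statement and is what the argument gives.

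\textbf{Step 1: the main term.} Let $q$ be the modulus of $\chi$. Mertens' theorem in arithmetic progressions (PNT for arithmetic progressions plus summation by parts) gives
\begin{equation*}
    \sum_{\substack{y<p\leq x\\ (p,q)=1}}\frac{1}{p}=\log\frac{\log x}{\log y}+O_q\Bigl(\frac{1}{\log y}\Bigr).
\end{equation*}
With $y=a_N^{\epsilon}$ and $x=a_N$, this equals $\log(1/\epsilon)+o(1)$, because $\log y\geq \log a_N/\log\log a_N\to\infty$. Dropping the finitely many primes dividing $q$ costs only $o(1)$.

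\textbf{Step 2: the oscillating term.} It remains to show that $\sum_{y<p\leq x}\chi(p)p^{-1-it}=o(1)$ uniformly in $x>y$ as $y\to\infty$, unless $\chi$ is principal and $t=0$. In that exceptional case the term equals the main term, and the distance squared becomes $2\log(1/\epsilon)+o(1)$, which is even better. Otherwise, split according to residue classes $\ell \bmod q$. If $t\neq0$, Lemma \ref{lemma_sum_pit} says each partial sum $\sum_{p\leq x,\,p\equiv\ell}p^{-1-it}$ converges, so by the Cauchy criterion the tail over $(y,x]$ is $o(1)$ uniformly in $x$. If $t=0$ and $\chi$ is non-principal, the PNT for arithmetic progressions gives $\sum_{y<p\leq x,\,p\equiv \ell}1/p = \frac{1}{\varphi(q)}\log\frac{\log x}{\log y}+O(1/\log y)$. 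The weights $\chi(\ell)$ then sum to zero, leaving $O_q(1/\log y)=o(1)$. Combining the two steps yields $\mathbb{D}^2\geq \log(1/\epsilon)+o(1)$.

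\textbf{Step 3: non-pretentiousness.} Apply the same computation to each block $(a_n^{1/\log\log a_n},a_n]$, i.e.\ with $\epsilon=1/\log\log a_n$. The block contributes at least $\log\log\log a_n + o(1)$ to $\mathbb{D}(f,\chi(n)n^{it};\infty)^2$. These blocks are disjoint, so the total diverges for every $\chi$ and $t$, and $f$ is non-pretentious.

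The main obstacle I expect is the uniformity in Step 2 when $t\neq 0$. Lemma \ref{lemma_sum_pit} is stated only as convergence, so I need its tail to be small uniformly over all $x>y$ and all $\epsilon$. The Cauchy criterion gives exactly this. To get a rate, one could reuse the integration-by-parts bound from the proof of that lemma, giving $O_{q,t}(1/\log y)$. The remaining points are routine: the lower endpoint condition $a_N^\epsilon>a_N^{1/\log\log a_N}$, and the growth $a_n\gg n^{n^{n^3}}$, which only enters through Lemma \ref{lemma_example_subset_primes}.
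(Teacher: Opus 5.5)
Your proof is correct and is essentially the paper's argument: every prime in $(a_N^{\epsilon},a_N]$ lies in $\mathcal{P}$, the main term comes from Mertens' theorem in progressions, and the twisted term is $o(1)$ by Lemma~\ref{lemma_sum_pit}. You are also right that only the squared bound $\mathbb{D}^2\gg\log 1/\epsilon$ can hold, and the paper's computation likewise only bounds $\mathbb{D}^2$; the unsquared form fails as $\epsilon\to 0$ because trivially $\mathbb{D}^2\leq 2\log(1/\epsilon)+o(1)$.

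Two cosmetic points. The summand should read $1+\Re\bigl(\overline{\chi(p)}p^{-it}\bigr)$, which changes nothing. In Step 3, a mere lower bound on $a_n$ does not make the blocks disjoint, but you do not need disjointness, since a single block already contributes $\log\log\log a_n+o(1)\to\infty$.
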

\begin{proof}
     Let $N\in \N$ be a sufficiently large integer. We start by showing the claim when we consider the constant $1$ function. For any $\epsilon \in (1/\log\log a_N, 1/2)$, by Merten's theorem,
    \begin{align*}
        \mathbb{D}(f,1;a_N^{\epsilon},a_N)^2 &= \sum_{a_N^{\epsilon}<p\leq a_N}\frac{2}{p}= 2\log1/\epsilon + O\left(\frac{1}{\epsilon \log a_N}\right),
    \end{align*}
    and the claim follows from this. If we now consider an arbitrary Dirichlet character $\chi$ of modulus $q$, we clearly have using the PNT for arithmetic progressions
    \begin{align*}
        \mathbb{D}(f,\chi;a_N^{\epsilon},a_N)^2 &\geq \sum_{\substack{0< \ell <q\\ (\ell,q)=1}}\left(1+\Re(\overline{\chi(\ell)}\right)\sum_{\substack{a_N^{\epsilon}<p\leq a_N\\p\equiv \ell \mod q}}\frac{1}{p}\\
        &= \frac{\log 1/\epsilon}{\varphi(q)}\sum_{\substack{0< \ell <q\\ (\ell,q)=1}}\left(1+\Re(\overline{\chi(\ell)}\right) +O\left(\frac{1}{\epsilon \log a_N}\right).
    \end{align*}
    Noticing that $\sum_{0< \ell <q, (\ell,q)=1}\left(1+\Re(\overline{\chi(\ell)}\right)\neq 0$ yields the conclusion. Assuming now that $t\neq 0$ and $\chi$ is again an arbitrary Dirichlet character of modulus $q$, we see that
    \begin{align*}
        \mathbb{D}(f,\chi(n)n^{it},a_N^{\epsilon},a_N)^2 &\geq \sum_{\substack{0< \ell <q\\(\ell,q)=1}}\sum_{\substack{a_N^{\epsilon}<p\leq a_N\\p\equiv \ell\mod q}}\frac{1}{p}+\Re\left(\sum_{\substack{0< \ell <q\\(\ell,q)=1}}\overline{\chi(\ell)}\sum_{\substack{a_N^{\epsilon}<p\leq a_N\\p\equiv \ell\mod q}}\frac{1}{p^{1+it}}\right).
    \end{align*}
    Using again the PNT for arithmetic progressions, we obtain
    \begin{equation*}
        \sum_{\substack{0< \ell <q\\(\ell,q)=1}}\sum_{\substack{a_N^{\epsilon}<p\leq a_N\\p\equiv \ell\mod q}}\frac{1}{p} = \log 1/\epsilon +o_{N\to \infty}(1).
    \end{equation*}
     Moreover, from Lemma \ref{lemma_sum_pit}, we deduce that
    \begin{equation*}
        \sum_{\substack{a_N^{\epsilon}<p\leq a_N\\p\equiv \ell\mod q}}\frac{1}{p^{1+it}} = o_{N\to \infty}(1),
    \end{equation*}
    from which we get $\mathbb{D}(f,\chi(n)n^{it},a_N^{\epsilon},a_N)\gg \log 1/\epsilon$.    
\end{proof}
Combining Theorem \ref{thm_big_goal_example} together with Lemma \ref{lemma_example_subset_primes} gives a constructive counterexample to the converse of Theorem \ref{thm_cool_criteria}.

Finally, we show that for any arbitrary subset of primes $\mathcal{P}$ of upper relative density $1$, its associated Liouville-like function cannot locally be pretentious either, thus generalising Theorem \ref{thm_big_goal_example}.
\begin{theorem}\label{thm_bis_big_goal_densities}
     Let $f\colon \N \to \mathbb{U}$ be defined by $f(n)=\lambda_{\mathcal{P}}(n)$, where $\mathcal{P}\subset \mathbb{P}$ has upper relative density equal to $1$. Then, there exists a strictly increasing sequence $(a_N)_{N\in \N} \subseteq\N$ of integers such that, for any Dirichlet character $\chi$, real number $t$ and $\epsilon:= \epsilon(N)$ taking values in $ (1/(\log\log a_N),1/2)$ we have $\mathbb{D}(f,\chi(n)n^{it};a_N^{\epsilon},a_N) \gg \log 1/\epsilon$ as $N\to \infty$.
\end{theorem}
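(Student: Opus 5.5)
The plan is to reduce the statement to the computation already carried out in the proof of Theorem \ref{thm_big_goal_example}. The only input needed about $\mathcal{P}$ is that, along a suitable sequence of scales $a_N$, almost every prime in $(a_N^{\epsilon},a_N]$ lies in $\mathcal{P}$, and that this holds uniformly across residue classes modulo any fixed $q$. Once this is available, on such a window $f(p)=-1$ for all but a negligible set of primes. The lower bounds then follow exactly as before: via Mertens' theorem for $\chi=1$, via the PNT in arithmetic progressions for general $\chi$, and via Lemma \ref{lemma_sum_pit} for $t\neq 0$.

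\textbf{Step 1: choosing the scales.} Since $\overline{d_{\mathbb P}}(\mathcal P)=1$, there is an increasing sequence $x_N\to\infty$ with $\pi_{\mathcal P}(x_N)\geq (1-\delta_N)\pi(x_N)$ for some $\delta_N\to 0$. Set $a_N=\lfloor x_N\rfloor$. Write $\mathcal{Q}=\mathbb P\setminus\mathcal P$. Then $\pi_{\mathcal Q}(a_N)\leq \delta_N\pi(a_N)$, so the primes of $\mathcal{Q}$ up to $a_N$ number at most $\delta_N a_N/\log a_N$. The key point is to bound the reciprocal sum of these primes over the window $(a_N^{\epsilon},a_N]$. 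Since $\sum 1/p$ is largest when the primes are smallest, the worst case is that $\mathcal{Q}$ fills the initial segment of primes above $a_N^{\epsilon}$. That initial segment must have count at most $\delta_N a_N/\log a_N$, which forces it to end at some $y_N\leq (C\delta_N) a_N$ roughly. Its reciprocal sum is then at most $\log\frac{\log a_N}{\log(\delta_N a_N)}+o(1)\approx \log\bigl(1/(1-\log(1/\delta_N)/\log a_N)\bigr)$. Choosing $x_N$ sparse enough that $\log(1/\delta_N)=o(\log a_N)$ makes this $o(1)$. In fact we may pass to a subsequence where $\delta_N$ decays slowly, since $\delta_N$ can always be replaced by a larger value such as $\max(\delta_N,1/\log\log x_N)$. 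The conclusion is
\[
\sum_{\substack{a_N^{\epsilon}<p\leq a_N\\ p\notin\mathcal P}}\frac1p=o(1)
\]
uniformly in $\epsilon$.

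\textbf{Step 2: the lower bounds.} For any $\chi$ of modulus $q$ and any $t$, we write
\[
\mathbb{D}(f,\chi(n)n^{it};a_N^{\epsilon},a_N)^2\geq \sum_{a_N^{\epsilon}<p\leq a_N}\frac{1+\Re(\overline{\chi(p)}p^{-it})}{p}-2\sum_{\substack{a_N^{\epsilon}<p\leq a_N\\p\notin\mathcal P}}\frac1p .
\]
By Step 1 the subtracted term is $o(1)$. The first sum is precisely the quantity estimated in the proof of Theorem \ref{thm_big_goal_example}. For $t=0$ the PNT in progressions gives $\frac{\log(1/\epsilon)}{\varphi(q)}\sum_{(\ell,q)=1}(1+\Re\overline{\chi(\ell)})$, and this coefficient is positive because $\chi(1)=1$. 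For $t\neq0$ we use $\log(1/\epsilon)+o(1)$ together with Lemma \ref{lemma_sum_pit} to dispose of the twisted sums. Since $\epsilon<1/2$, we have $\log(1/\epsilon)\geq\log 2$, so the $o(1)$ error is absorbed. This gives the bound $\gg\log 1/\epsilon$ on the squared distance, and hence the stated claim in the same sense as in Theorem \ref{thm_big_goal_example}.

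\textbf{Main obstacle.} The hardest part is Step 1: turning the counting bound at the single point $a_N$ into a reciprocal-sum bound over the whole window. The extremal rearrangement argument (small primes carry the most weight) is the tool I would try first, paired with the freedom to thin the sequence $x_N$. The dependence on $\epsilon(N)$ needs care when $\epsilon$ is as small as $1/\log\log a_N$. In that range the worst-case bound above is still $o(1)$ provided $\log(1/\delta_N)=o(\log a_N)$, which is harmless.
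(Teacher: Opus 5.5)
\textbf{There is a genuine gap, and it is in Step 1, on which everything rests.} Your extremal computation is inverted. Suppose $\mathbb{P}\setminus\mathcal{P}$ fills the primes of $(a_N^{\epsilon},y_N]$ with $y_N\approx\delta_N a_N$. Its reciprocal mass is then
\[
\log\frac{\log(\delta_N a_N)}{\epsilon\log a_N}+o(1)=\log\frac{1}{\epsilon}+\log\Bigl(1-\frac{\log(1/\delta_N)}{\log a_N}\Bigr)+o(1).
\]
Under your normalisation $\log(1/\delta_N)=o(\log a_N)$, this equals $\log(1/\epsilon)-o(1)$, which is essentially the whole mass of the window. The quantity $\log\frac{\log a_N}{\log(\delta_N a_N)}$ you wrote is the mass of the top piece $(\delta_N a_N,a_N]$. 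That is all the reciprocal mass a count at the single point $a_N$ guarantees to $\mathcal{P}$, and it tends to $0$. Thinning the sequence does not help, and enlarging $\delta_N$ only weakens the information.

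In fact nothing can rescue Step 1, because the statement itself fails. Let $\mathcal{P}=\mathbb{P}\cap\bigcup_{n\geq 2}(b_n/n,b_n]$ with $b_n=2^{2^{2^n}}$. Then $\pi_{\mathcal{P}}(b_n)/\pi(b_n)\geq 1-\pi(b_n/n)/\pi(b_n)\to 1$, so $\overline{d_{\mathbb{P}}}(\mathcal{P})=1$. On the other hand, Mertens gives $\sum_{p\in\mathcal{P}}1/p\ll\sum_n(1+\log n)/\log b_n<\infty$. Hence, for every $x\to\infty$ and every $\epsilon\in(1/\log\log x,1/2)$,
\[
\mathbb{D}(\lambda_{\mathcal{P}},1;x^{\epsilon},x)^2\leq 2\sum_{\substack{p\in\mathcal{P}\\ p>x^{1/\log\log x}}}\frac{1}{p}\longrightarrow 0.
\]
So with $\chi=1$ and $t=0$, the bound $\gg\log(1/\epsilon)\geq\log 2$ fails along every sequence $(a_N)$. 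This is not an artefact of pretentiousness: adjoining to $\mathcal{P}$ a set of relative density $0$ with divergent reciprocal sum makes the resulting Liouville-like function non-pretentious, and Lemma \ref{lemma_technical_result} with $\epsilon=1/4$ fixed shows it is still a counterexample.

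The paper's proof does not supply the missing step either; it relies on the same claim $\sum_{a_N^{\epsilon}<p\leq a_N,\,p\notin\mathcal{P}}1/p=o(1)$.
\begin{itemize}
\item It bounds the contribution of $p\notin\mathcal{P}$ by $2a_N^{-\epsilon}$ times the number of such primes. That is $a_N^{-\epsilon}\cdot o(\pi(a_N))$, not $o(1)$; that sum is nonnegative anyway and could simply be dropped.
\item For the main sum it cites Lemma \ref{lemma_technical_result}. That lemma would need $\mathbb{P}\setminus\mathcal{P}$ to have relative density $0$, whereas $\overline{d_{\mathbb{P}}}(\mathcal{P})=1$ only gives lower density $0$.
\item The detour through Lemma \ref{lemma_bis_subset_primes_contain_arbitrarily_long_intervals} is again pure counting, dominated by the top of the window.
\end{itemize}
Your Step 2 coincides with the paper's final step and is fine. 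You were also right to phrase the bound for $\mathbb{D}^2$: since $\mathbb{D}^2\leq 2\log(1/\epsilon)+o(1)$, the bound $\mathbb{D}\gg\log(1/\epsilon)$ is impossible once $\epsilon\to 0$. What the argument really needs is control of the logarithmic weight of $\mathbb{P}\setminus\mathcal{P}$ on the windows, for instance $\sum_{a_N^{\epsilon}<p\leq a_N,\,p\notin\mathcal{P}}1/p=o(\log 1/\epsilon)$. By partial summation this holds for every sequence if $d_{\mathbb{P}}(\mathcal{P})=1$, but it does not follow from upper density $1$.
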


This result follows from the following lemma.
\begin{lemma}\label{lemma_bis_subset_primes_contain_arbitrarily_long_intervals}
    Let $\mathcal{P}\subset \mathbb{P}$ such that $\overline{d_{\mathbb{P}}}(\mathcal{P}) = 1$. Then, \begin{equation*}
        d^{I}_{\mathbb{P}}(\mathcal{P}) := \limsup_{N\to\infty} \frac{|\mathcal{P}\cap [N^{\frac{1}{\log\log N}},N]|}{|\mathbb{P}\cap[N^{\frac{1}{\log\log N}},N]|} =1.
    \end{equation*}
\end{lemma}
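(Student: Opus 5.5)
We argue along a sequence witnessing the upper density. Since $\overline{d_{\mathbb{P}}}(\mathcal{P})=1$, there is a strictly increasing sequence $N_k\to\infty$ with $\pi_{\mathcal{P}}(N_k)/\pi(N_k)\to 1$. Equivalently, the number of primes $p\leq N_k$ outside $\mathcal{P}$ is $o(\pi(N_k))$. Set $M_k := N_k^{1/\log\log N_k}$ and write
\begin{equation*}
    |\mathcal{P}\cap [M_k,N_k]| \geq \pi(N_k) - \pi(M_k) - \bigl(\pi(N_k)-\pi_{\mathcal{P}}(N_k)\bigr) - 1 .
\end{equation*}
Divide by $|\mathbb{P}\cap[M_k,N_k]| = \pi(N_k)-\pi(M_k)+O(1)$. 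The claim then reduces to two facts.

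The first fact is that $\pi(M_k)=o(\pi(N_k))$. This is the same PNT computation as in the proof of Lemma \ref{lemma_example_subset_primes}:
\begin{equation*}
    \frac{\pi(M_k)}{\pi(N_k)} \sim N_k^{\frac{1}{\log\log N_k}-1}\log\log N_k \to 0 .
\end{equation*}
Consequently $|\mathbb{P}\cap[M_k,N_k]| \sim \pi(N_k)$.

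The second fact is that the complement count satisfies $\pi(N_k)-\pi_{\mathcal{P}}(N_k) = o(\pi(N_k))$, which holds by the choice of $N_k$. Hence it is also $o(|\mathbb{P}\cap[M_k,N_k]|)$.

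Combining the two facts, the ratio along $N=N_k$ is at least $1-o(1)$. The ratio is trivially at most $1$, so the $\limsup$ equals $1$.

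The only point needing care is that all errors must be measured against the right denominator. This works because the interval $[N^{1/\log\log N},N]$ carries asymptotically all primes up to $N$, so passing from $[1,N]$ to this interval loses nothing at the level of relative density. I expect no real obstacle here. The step to state carefully is that the comparison is made at the same scales $N_k$ where the upper density is attained, rather than for all $N$.

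For Theorem \ref{thm_bis_big_goal_densities}, I would then take $a_N := N_k$. On $(a_N^{\epsilon},a_N]$, the set $\mathcal{P}$ has relative density tending to $1$: the proportion of excluded primes up to $a_N$ is $o(1)$. That proportion contributes only $o(\log 1/\epsilon)$ to the reciprocal sums, by the dyadic decomposition argument of Lemma \ref{lemma_technical_result}, whose total mass is $\log(1/\epsilon)/\log 2$ up to constants. With this in hand, the computation in the proof of Theorem \ref{thm_big_goal_example} goes through verbatim.
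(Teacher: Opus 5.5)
Your proof is correct and is essentially the paper's. Both arguments combine $\pi(N^{1/\log\log N})=o(\pi(N))$, which follows from the PNT, with $\overline{d_{\mathbb P}}(\mathcal P)=1$. The paper bounds the denominator by $\pi(N)$ and uses $\limsup(a_N-b_N)\geq\limsup a_N-\limsup b_N$, whereas you work along an explicit subsequence $N_k$ that witnesses the upper density; the two are equivalent.

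Your closing remark on Theorem \ref{thm_bis_big_goal_densities} does not follow, however. Having only $o(\pi(a_N))$ primes up to $a_N$ outside $\mathcal P$ gives no control of their reciprocal sum on $(a_N^{\epsilon},a_N]$. Those excluded primes could fill all of $(a_N^{\epsilon},a_N/\log\log a_N]$, which carries mass $\log(1/\epsilon)+o(1)$. The dyadic argument of Lemma \ref{lemma_technical_result} needs density $0$ at every scale between $a_N^{\epsilon}$ and $a_N$, not just at $a_N$. For example, take $\mathcal P=\bigcup_k\mathbb P\cap(b_k/\log\log b_k,b_k]$ with $b_{k+1}=\exp\exp(b_k)$. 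Then $\overline{d_{\mathbb P}}(\mathcal P)=1$, yet $\mathbb D(\lambda_{\mathcal P},1;x^{\epsilon},x)=o(1)$ as $x\to\infty$, so no choice of $a_N$ rescues that step.
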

\begin{proof}
Let $\mathcal{P}\subset \mathbb{P}$, and assume $\overline{d_{\mathbb{P}}}(\mathcal{P}) = 1$. We want to show that
\begin{equation*}
    d^{I}_{\mathbb{P}}(\mathcal{P}) = \limsup_{N\to \infty}\frac{\pi_{\mathcal{P}}(N) - \pi_{\mathcal{P}}(N^{1/\log\log N})}{\pi(N) - \pi(N^{1/\log\log N})} = 1.
\end{equation*}
Notice that we have
\begin{align*}
    d^{I}_{\mathbb{P}}(\mathcal{P}) &\geq \limsup_{N\to \infty}\frac{\pi_{\mathcal{P}}(N) - \pi_{\mathcal{P}}(N^{1/\log\log N})}{\pi(N) }\\&\geq \limsup_{N\to \infty} \frac{\pi_{\mathcal{P}}(N)}{\pi(N)}-\limsup_{N\to \infty} \frac{\pi_{\mathcal{P}}(N^{1/\log\log N})}{\pi(N)}
    = 1-\limsup_{N\to \infty} \frac{\pi_{\mathcal{P}}(N^{1/\log\log N})}{\pi(N)}.
\end{align*}
Using the PNT, we see that, as $N\to \infty$, we have
\begin{align*}
    \frac{\pi_{\mathcal{P}}(N^{1/\log\log N})}{\pi(N)} &\ll\frac{N^{1/\log\log N}\log N}{N\log\left(N^{1/\log\log N}\right)}= N^{(1/\log\log N) -1}\log\log N,
\end{align*}
and we conclude by noticing that $N^{(1/\log\log N)-1 }\log\log N = o_{N\to \infty}(1)$.
\end{proof}

\begin{proof}[Proof of Theorem $\ref{thm_bis_big_goal_densities}$]
    Let $\mathcal{P}\subset \mathbb{P}$ be a set such that $\overline{d_{\mathbb{P}}}(\mathcal{P})=1$. We know from Lemma \ref{lemma_bis_subset_primes_contain_arbitrarily_long_intervals} that $d^I_{\mathbb{P}}(\mathcal{P})=1$. Let $(u_N)_{N\in \N}$ be the sequence defined by
    \begin{align*}
        u_N = \frac{|\mathcal{P}\cap [N^{\frac{1}{\log\log N}},N]|}{|\mathbb{P}\cap [N^{\frac{1}{\log\log N}},N]|},
    \end{align*}
    so that $d^I_{\mathbb{P}}(\mathcal{P})=\limsup_{N\to \infty} u_N = 1$. Consider $(a_N)_{N\in \N} \subset \N$ to be a strictly increasing sequence of integers such that $u_{a_N} > 1-1/N$ for all $N\in \N$. Let $\chi$ be a Dirichlet character and fix $t\in \R$. For any $N\in \N$, fix $\epsilon\in (1/\log\log a_N, 1/2)$. We will show that $\mathbb{D}(f,\chi(n)n^{it};a_N^{\epsilon},a_N) \gg \log1/\epsilon$ as $N\to \infty$. We have
    \begin{equation}\label{eq_2}
        \mathbb{D}(f,\chi(n)n^{it};a_N^{\epsilon},a_N)^2
        = \sum_{\substack{a_N^{\epsilon}<p\leq a_N \\ p\in \mathcal{P}}}\frac{1+\Re(\chi(p)p^{it})}{p}+\sum_{\substack{a_N^{\epsilon}<p\leq a_N \\ p\notin  \mathcal{P}}}\frac{1-\Re(\chi(p)p^{it})}{p}.
    \end{equation}
    We now show that the second sum on the right-hand side of \eqrefn{eq_2} is $ o_{N\to \infty}(1)$. In fact, we have 
    \begin{align*}
        \left| \sum_{\substack{a_N^{\epsilon}<p\leq a_N \\ p\notin  \mathcal{P}}}\frac{1-\Re(\chi(p)p^{it})}{p}\right| 
        &\leq \frac{2}{a_N^{\epsilon}} \sum_{\substack{a_N^{\epsilon}<p\leq a_N \\ p\notin  \mathcal{P}}} 1 \\&=\frac{2}{a_N^{\epsilon}} \left( |\mathbb{P}\cap [a_N^{\epsilon},a_N]| - \frac{|\mathcal{P}\cap [a_N^{\epsilon},a_N]|}{|\mathbb{P}\cap [a_N^{\epsilon},a_N]|}|\mathbb{P}\cap [a_N^{\epsilon},a_N]|\right).
    \end{align*}
    Observe that
    \begin{align*}
        1\geq  \frac{|\mathcal{P}\cap [a_N^{\epsilon},a_N]|}{|\mathbb{P}\cap [a_N^{\epsilon},a_N]|}&= \frac{|\mathcal{P}\cap [a_N^{\epsilon},a_N]|+|\mathbb{P}\cap [a_N^{\frac{1}{\log\log a_N}},a_N^{\epsilon}]|-|\mathbb{P}\cap [a_N^{\frac{1}{\log\log a_N}},a_N^{\epsilon}]|}{|\mathbb{P}\cap [a_N^{\epsilon},a_N]|}\\
        &\geq\frac{|\mathcal{P}\cap [a_N^{\frac{1}{\log\log a_N}},a_N]|-|\mathbb{P}\cap [a_N^{\frac{1}{\log\log a_N}},a_N^{\epsilon}]|}{|\mathbb{P}\cap [a_N^{\epsilon},a_N]|}.
    \end{align*}
    Using now $d^I_{\mathbb{P}}(\mathcal{P})=1$, we have $|\mathcal{P}\cap [a_N^{\frac{1}{\log\log a_N}},a_N]| \sim |\mathbb{P}\cap [a_N^{\frac{1}{\log\log a_N}},a_N]|$ as $N\to \infty$, hence
    \begin{align*}
        \frac{|\mathcal{P}\cap [a_N^{\frac{1}{\log\log a_N}},a_N]|-|\mathbb{P}\cap [a_N^{\frac{1}{\log\log a_N}},a_N^{\epsilon}]|}{|\mathbb{P}\cap [a_N^{\epsilon},a_N]|}&\sim \frac{|\mathbb{P}\cap [a_N^{\frac{1}{\log\log a_N}},a_N]|-|\mathbb{P}\cap [a_N^{\frac{1}{\log\log a_N}},a_N^{\epsilon}]|}{|\mathbb{P}\cap [a_N^{\epsilon},a_N]|} = 1.
    \end{align*}
    In other words, we have $ |\mathcal{P}\cap [a_N^{\epsilon},a_N]| \xrightarrow{N\to \infty}  |\mathbb{P}\cap [a_N^{\epsilon},a_N]|$, hence the conclusion follows.
    We now consider the first sum on the right-hand side of \eqrefn{eq_2}. Notice that
    \begin{align*}
        \sum_{\substack{a_N^{\epsilon}<p\leq a_N \\ p\in \mathcal{P}}}\frac{1+\Re(\chi(p)p^{it})}{p}&= \sum_{a_N^{\epsilon}<p\leq a_N }\frac{1+\Re(\chi(p)p^{it})}{p}\,-\sum_{\substack{a_N^{\epsilon}<p\leq a_N \\ p\notin \mathcal{P}}}\frac{1+\Re(\chi(p)p^{it})}{p}.
    \end{align*}
    Using Lemma \ref{lemma_technical_result}, we see that
    \begin{align*}
        \left| \sum_{\substack{a_N^{\epsilon}<p\leq a_N \\ p\notin \mathcal{P}}}\frac{1+\Re(\chi(p)p^{it})}{p}\right| &\leq \sum_{\substack{a_N^{\epsilon}<p\leq a_N \\ p\notin \mathcal{P}}}\frac{2}{p} = o_{N\to\infty}(1).
    \end{align*}
     Moreover, similarly as in the proof of Theorem \ref{thm_big_goal_example}, we can show using Lemma \ref{lemma_sum_pit} that for any $t\in \R$ we have $\sum_{a_N^{\epsilon}<p\leq a_N }\frac{1+\Re(\chi(p)p^{it})}{p} \gg \log 1/\epsilon$, giving the claim.
\end{proof}

A natural direction for future work would be to generalise Theorem \ref{thm_bis_big_goal_densities} to the setting where $\mathcal{P}\subset \P$ has positive upper relative density. 

\section{Acknowledgments}
The author is grateful to Michael Baake, Michael Coons and Florian Richter for helpful discussions and comments on the manuscript. He thanks the TRR 358 for support during two visits in 2026, where this manuscript was completed.


\begin{thebibliography}{99}

\bibitem{Chowla1966TheRH}
S.~Chowla,
The Riemann hypothesis and Hilbert's tenth problem,
\textit{Amer. Math. Monthly} \textbf{73} (1966), 906--907.

\bibitem{Elliott1994OnTC}
P.~D.~T.~A. Elliott,
On the correlation of multiplicative functions and the sum of additive functions,
\textit{Mem. Amer. Math. Soc.} \textbf{112} (1994), no.~538.

\bibitem{granvillebook}
A.~Granville and K.~Soundararajan,
\textit{Multiplicative Number Theory: The Pretentious Approach},
In preparation.

\bibitem{Halsz1968berDM}
G.~Halász,
Über die Mittelwerte multiplikativer zahlentheoretischer Funktionen,
\textit{Acta Math. Acad. Sci. Hungar.} \textbf{19} (1968), 365--403.

\bibitem{klurman2023elliottsconjectureapplications}
O.~Klurman, A.~P.~Mangerel, and J.~Teräväinen,
On Elliott's conjecture and applications,
Preprint, arXiv:2304.05344 (2023).

\bibitem{Matom_ki_2015}
K.~Matomäki, M.~Radziwiłł, and T.~Tao,
An averaged form of Chowla's conjecture,
\textit{Algebra Number Theory} \textbf{9} (2015), no.~9, 2167--2196. arXiv:1503.05121.


\bibitem{IASsarnak}
P.~Sarnak,
Three lectures on the Möbius function, randomness and dynamics,
Lecture notes, Institute for Advanced Study (2011).

\bibitem{Tao_Tera_2019}
T.~Tao and J.~Teräväinen,
The structure of logarithmically averaged correlations of multiplicative functions,
with applications to the Chowla and Elliott conjectures,
\textit{Duke Math. J.} \textbf{168} (2019), no.~11, 2053--2150. arXiv:1708.02610.


\bibitem{Tao_2019}
T.~Tao and J.~Teräväinen,
The structure of correlations of multiplicative functions at almost all scales,
with applications to the Chowla and Elliott conjectures,
\textit{Algebra Number Theory} \textbf{13} (2019), no.~9, 2103--2150. arXiv:1809.02518.


\bibitem{tao2016logarithmicallyaveragedchowlaelliott}
T.~Tao,
The logarithmically averaged Chowla and Elliott conjectures for two-point correlations,
\textit{Forum Math. Pi} \textbf{4} (2016), e8: 1--36. arXiv:1509.05422.


\bibitem{tenenbaum1995introduction}
G.~Tenenbaum,
\textit{Introduction to Analytic and Probabilistic Number Theory},
Cambridge University Press, Cambridge, (1995).

\end{thebibliography}
\end{document}